\documentclass[12pt,a4paper]{amsart}
\usepackage{amssymb}
\usepackage{amsmath}

\usepackage{xcolor}
%************************************************

\newtheorem{theorem}{Theorem}[section]
\newtheorem{lemma}{Lemma}[section]
\newtheorem{corollary}{Corollary}[section]
\theoremstyle{remark}
\newtheorem{example}{Example}[section]

\theoremstyle{remark}
\newtheorem{remark}{Remark}[section]
\newtheorem{remarks}{Remarks}[section]

%************************************************
\begin{document}
\title[Relations between certain classes \dots]{Some relations between certain classes of analytic functions}

\author[P. Goswami]{Pranay Goswami}
\address{Department of Mathematics, Amity University Rajasthan, Jaipur-303002, India}
\email{pranaygoswami83@gmail.com}

\author[T. Bulboac\u{a}]{Teodor Bulboac\u{a}}
\address{Faculty of Mathematics and Computer Science, Babe\c{s}-Bolyai University, 400084 Cluj-Napoca, Romania}
\email{bulboaca@math.ubbcluj.ro}

\author[S. K. Bansal]{Sanjay K. Bansal}
\address{Bansal School of Engineering and Technology, Jaipur-303904, India}
\email{bansalindain@gmail.com}

\date{}

\begin{abstract}
In the present paper we introduce and studied two subclasses of multivalent functions denoted by $\mathcal{M}^{\lambda}_{p,n}(\gamma;\beta)$ and $\mathcal{N}^{\lambda}_{p,n}(\mu,\eta;\delta)$. Further, by giving specific values of the parameters of our main results, we will find some connection between these two classes, and moreover, several consequences of main results are also discussed.
\end{abstract}

\subjclass[2010]{30C45}
\keywords{Analytic functions, multivalent functions, starlike functions, convex functions, Bazilevi\'{c} functions.}

\date{}

\maketitle
%************************************************
\section{Introduction and Preliminaries}
\setcounter{equation}{0}

Let $\mathcal{H}(\mathrm{U})$ denote the class of analytic functions in the unit disk $\mathrm{U}=\left\{z\in\mathbb{C}:|z|<1\right\}$, and let $\mathcal{A}(p,n)$ be the subclass of $\mathcal{H}(\mathrm{U})$ of the form
$$f(z)=z^{p}+\sum_{k=p+n}^{\infty}a_k z^{k},\;\left(n,p\in\mathbb{N}\right).$$

A function $f\in\mathcal{A}(p,n)$ is said to be {\it multivalent starlike functions of order $\alpha$} in $\mathrm{U}$, if it satisfies following inequality
$$\operatorname{Re}\frac{zf^{\prime}(z)}{f(z)}>\alpha,\;z\in\mathrm{U},
\;\left(0\leq\alpha<p,\;p\in\mathbb{N}\right),$$
and we denote this class by $\mathcal{S}_{p,n}^*(\alpha)$.

A function $f\in\mathcal{A}(p,n)$ is said to be {\it multivalent convex functions of order $\alpha$} in $\mathrm{U}$, if it satisfies following inequality
$$\operatorname{Re}\left[1+\frac{zf^{\prime\prime}(z)}{f^{\prime}(z)}\right]>\alpha,\;z\in\mathrm{U},
\;\left(0\leq\alpha<p,\;p\in\mathbb{N}\right),$$
and we denote this class by $\mathcal{C}_{p,n}(\alpha)$.

In the recent paper of Irmark and Raina \cite{irmak}, the authors introduced the subclass $\mathcal{T}_{\lambda}(p;\alpha)$ of $\mathcal{A}(p):=\mathcal{A}(p,1)$, consisting on the functions $f\in\mathcal{A}(p)$ that satisfies the inequality
$$\operatorname{Re}\dfrac{zf^\prime(z)+\lambda{z^2}f^{\prime\prime}(z)}{(1-\lambda)f(z)+
\lambda{z}f^{\prime}(z)}>\alpha,\;z\in\mathrm{U},\;\left(0\leq\alpha<p,\;p\in\mathbb{N}\right),$$
with $0\leq\lambda\leq1$.

Motivated by the class $\mathcal{T}_\lambda(p;\alpha)$, we introduce two new subclasses of $\mathcal{A}(p,n)$.

Thus, let $\mathcal{M}^{\lambda}_{p,n}(\gamma;\beta)$ be the class of functions $f\in\mathcal{A}(p,n)$ that satisfy the condition
\begin{eqnarray*}
&\operatorname{Re}\left[(1-\gamma)\dfrac{z\mathcal{F}_\lambda'(z)}{\mathcal{F}_\lambda(z)}+
\gamma\left(1+\dfrac{z\mathcal{F}_\lambda''(z)}{\mathcal{F}_\lambda'(z)}\right)\right]>\beta,
\;z\in\mathrm{U},\\
&\left(0\leq\lambda\leq1,\;0\leq\beta<p,\;\gamma\in\mathbb{R},\;p\in\mathbb{N}\right),
\end{eqnarray*}
and let $\mathcal{N}^{\lambda}_{p,n}(\mu,\eta;\delta)$ be the class of functions $f\in\mathcal{A}(p,n)$ that satisfy the conditions
\begin{eqnarray*}
&\mathcal{F}_\lambda(z)\mathcal{F}_\lambda'(z)\neq0,
\;z\in\dot{\mathrm{U}}:=\mathrm{U}\setminus\{0\},
\end{eqnarray*}
and
\begin{eqnarray*}
&\operatorname{Re}\left[\left(\dfrac{\mathcal{F}_\lambda(z)}{z^p}\right)^\mu
\left(\dfrac{\mathcal{F}_\lambda'(z)}{z^{p-1}}\right)^\eta\right]>\delta,\;z\in\mathrm{U},\\
&\left(0\leq\lambda\leq1,\;0\leq\delta<p^\eta(1+\lambda(p-1))^{\eta+\mu},\;\mu,\eta\in\mathbb{R},\;
p\in\mathbb{N}\right)
\end{eqnarray*}
where
\begin{equation}\label{defFlambda}
\mathcal{F}_\lambda(z)=(1-\lambda)f(z)+\lambda zf^\prime(z),
\end{equation}
and all the powers are the principal ones.

From above definitions, the following subclasses of the classes $\mathcal{A}(p,n)$ and $\mathcal{A}(n)=\mathcal{A}(1,n)$ emerge from the families of the functions $\mathcal{M}^{\lambda}_{p,n}(\gamma;\beta)$ and $\mathcal{N}^{\lambda}_{p,n}(\mu,\eta;\delta)$:
$$\begin{array}{l}
\mathcal{M}_{p,n}^0(0;\beta)=\mathcal{N}_{p,n}^0(-1,1;\beta)=\mathcal{S}_{p,n}^*(\beta),\;
(0\leq\beta<p);\\[1mm]
\mathcal{M}_{1,n}^0(0;\beta)=\mathcal{N}_{1,n}^0(-1,1;\beta)=\mathcal{S}_{1,n}^*(\beta)=:
\mathcal{S}_{n}^*(\beta),\;(0\leq\beta<1);\\[1mm]
\mathcal{M}_{p,n}^1(0;\beta)=\mathcal{C}_{p,n}(\beta),\;(0\leq\beta<p);\\[1mm]
\mathcal{M}_{1,n}^1(0;\beta)=\mathcal{C}_{1,n}(\beta)=:\mathcal{C}_{n}(\beta),\;(0\leq\beta<1);\\[1mm]
\mathcal{M}_{1,p}^\lambda(0;\beta)=\mathcal{T}_\lambda(p;\beta),\;(0\leq\beta<p);\\[1mm]
\mathcal{N}_{1,n}^0(1,\eta ;\beta)=:\mathcal{B}_n(\eta;\beta),\;(\eta\geq-1,\;0\leq\beta<1).
\end{array}$$

Note that $\mathcal{S}_{p,n}^*(\beta)$, $\mathcal{C}_{p,n}(\beta)$, $\mathcal{S}_{n}^*(\beta)$, $\mathcal{C}_{n}(\beta)$ and ${\mathcal{B}_n}(\eta ;\beta)$ are said to be the class of multivalent starlike functions of order $\beta$, multivalent convex functions of order $\beta$, univalent starlike functions of order $\beta$, univalent convex functions of order $\beta$, and a subclass of Bazilevi\'{c} functions, respectively. Also, we remark that the class ${\mathcal{T}_\lambda}(p;\beta)$ was studied by Irmak and Raina \cite{irmak}.

Let denote by $\mathcal{H}[a,n]$ the class
$$\mathcal{H}[a,n]=\left\{p\in\mathcal{H}(\mathrm{U}):p(z)=a+a_nz^n+\dots,\;z\in\mathrm{U}\right\}.$$

In this paper we will extend the results of Irmak et al. \cite{irmak1} for multivalent functions, by defining the differential operator $\mathcal{J}_\lambda^p(\mu,\eta):\mathcal{A}_{p,n}\rightarrow\mathcal{H}[(\mu+\eta),p+n]$,
$$\mathcal{J}_\lambda^p(\mu,\eta)f(z)=\mu\frac{z\mathcal{F}_\lambda^\prime(z)}{\mathcal{F}_\lambda(z)}+
\eta\left(1+\frac{z\mathcal{F}_\lambda^{\prime\prime}(z)}{\mathcal{F}_\lambda^\prime(z)}\right),$$
and further find its relationship with $\mathcal{N}^{\lambda}_{p,n}(\mu,\eta;\delta)$.

To prove these results, we will require following Lemmas due to Miller and Mocanu \cite[p. 33--35]{miller}:

\begin{lemma}\label{lemma1}
Let $\Omega\subset\mathbb{C}$ and suppose that the function $\psi:\mathbb{C}^2\times\mathrm{U}\rightarrow\mathbb{C}$ satisfies $\psi(Me^{i\theta},Ke^{i\theta};z)\notin\Omega$ for all $K\geq Mn$, $\theta\in\mathbb{R}$, and $z\in\mathrm{U}$. If $p\in\mathcal{H}[0,n]$ and $\psi(p(z),zp'(z))\in\Omega$ for all $z\in\mathrm{U}$, then $|p(z)|<M$ for all $z\in\mathrm{U}$.
\end{lemma}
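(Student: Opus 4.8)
The plan is to argue by contradiction and to reduce the assertion to Jack's maximum-modulus lemma. Assume the conclusion fails, so that $|p(z_1)|\geq M$ for some $z_1\in\mathrm{U}$. Since $p\in\mathcal{H}[0,n]$ we have $p(0)=0$, so the function $r\mapsto\max_{|z|=r}|p(z)|$ is continuous and non-decreasing on $[0,1)$ and vanishes at $r=0$; hence there exist a radius $r_0\in(0,1)$ and a point $z_0$ with $|z_0|=r_0$ such that $|p(z_0)|=M$ while $|p(z)|\leq M$ for all $|z|\leq r_0$. Thus $|p|$ attains its maximum over the closed disc $\{|z|\leq r_0\}$ at the boundary point $z_0$.

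First I would apply Jack's lemma in the refined form due to Miller and Mocanu: since $p\in\mathcal{H}[0,n]$ and the maximum of $|p|$ over $\{|z|\leq r_0\}$ is attained at $z_0$, the number $\kappa:=z_0p'(z_0)/p(z_0)$ is real and satisfies $\kappa\geq n$. Writing $p(z_0)=Me^{i\theta}$ with $\theta\in\mathbb{R}$, this gives $z_0p'(z_0)=\kappa Me^{i\theta}=Ke^{i\theta}$, where $K:=\kappa M\geq Mn$.

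Next I would substitute into $\psi$. On one hand, the second hypothesis yields $\psi\big(p(z_0),z_0p'(z_0);z_0\big)\in\Omega$. On the other hand, since $z_0\in\mathrm{U}$ and $K\geq Mn$, the admissibility hypothesis on $\psi$ gives $\psi\big(Me^{i\theta},Ke^{i\theta};z_0\big)\notin\Omega$. But $\big(p(z_0),z_0p'(z_0)\big)=\big(Me^{i\theta},Ke^{i\theta}\big)$, so these two statements are contradictory. Hence $|p(z)|<M$ for every $z\in\mathrm{U}$.

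The reduction itself is short; the substantive ingredient, and therefore the main obstacle, is having the sharp form of Jack's lemma available, specifically the improvement $\kappa\geq n$ in place of the generic $\kappa\geq 1$, which is exactly where the order-$n$ vanishing of $p$ at the origin is used. One establishes it by a Schwarz-type argument: the reality of $\kappa$ follows from the vanishing of the tangential derivative of $|p(r_0e^{it})|^2$ at $z_0$ (a maximum over the circle $|z|=r_0$), while the bound $\kappa\geq n$ follows, for instance, by applying the boundary maximum principle (Hopf's lemma) to $\operatorname{Re}\log\big(p(z)/z^n\big)$, which is harmonic near $z_0$ and $\leq 0$ on $|z|=r_0$ with equality at $z_0$. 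All remaining steps are routine.
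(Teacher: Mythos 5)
The paper states this lemma as a known preliminary quoted from Miller and Mocanu \cite[p.~33--35]{miller} and gives no proof of its own, and your argument is correct and is precisely the standard derivation: reduce to the Jack--Miller--Mocanu boundary lemma at a point $z_0$ where $|p|$ first attains the value $M$, obtaining $z_0p'(z_0)=\kappa p(z_0)$ with $\kappa\in\mathbb{R}$, $\kappa\geq n$, so that $\bigl(p(z_0),z_0p'(z_0)\bigr)=\bigl(Me^{i\theta},Ke^{i\theta}\bigr)$ with $K=\kappa M\geq Mn$ contradicts the admissibility hypothesis. The only implicit point worth recording is $M>0$ (needed both for the conclusion to be nonvacuous and for $p\not\equiv0$ in the Jack-lemma step), and your sketch of the sharp bound $\kappa\geq n$ via the Schwarz-type estimate $|p(z)|\leq M|z|^n/r_0^n$ on $|z|\leq r_0$ followed by Hopf's lemma is a legitimate substitute for the computation in the cited reference.
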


\begin{lemma}\label{lemma2}
Let $\Omega\subset\mathbb{C}$ and suppose that the function $\psi:\mathbb{C}^2\times\mathrm{U}\rightarrow\mathbb{C}$ satisfies $\psi(ix,y;z)\notin\Omega$ for all $x\in\mathbb{R}$, $y\leq-n(1+x^2)/2$, and $z\in\mathrm{U}$. If $p\in\mathcal{H}[1,n]$ and $\psi(p(z),zp'(z))\in\Omega$ for all $z\in\mathrm{U}$, then $\operatorname{Re}p(z)>0$ for all $z\in\mathrm{U}$.
\end{lemma}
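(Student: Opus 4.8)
The plan is to turn the statement into an application of Jack's (Clunie--Jack) lemma through the substitution $w=\dfrac{1-p}{1+p}$. Since $p\in\mathcal{H}[1,n]$ we have $p(0)=1$, so $w$ is analytic in a neighbourhood of the origin with $w(0)=0$, and in fact $w\in\mathcal{H}[0,n]$ wherever it is defined, because the Taylor expansion of $1-p$ begins with a term of order $\ge n$. Moreover $p=\dfrac{1-w}{1+w}$, and as the M\"obius transformation $\zeta\mapsto\dfrac{1-\zeta}{1+\zeta}$ maps the unit disk onto the right half-plane, the conclusion $\operatorname{Re}p(z)>0$ on $\mathrm{U}$ is equivalent to $|w(z)|<1$ on $\mathrm{U}$; this is what I would prove. (If $w\equiv0$ then $p\equiv1$ and the claim is trivial, so assume $w\not\equiv0$.)

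I would then argue by contradiction. If $|w|<1$ fails somewhere in $\mathrm{U}$, then, since $\operatorname{Re}p(0)=1>0$ and $\operatorname{Re}p$ is continuous, there is a point $z_0\in\mathrm{U}$ of smallest modulus with $\operatorname{Re}p(z_0)=0$, while $\operatorname{Re}p(z)>0$ --- equivalently $|w(z)|<1$ --- for $|z|<|z_0|$. At such a point $p(z_0)=ix$ with $x\in\mathbb{R}$, so $1+p(z_0)=1+ix\neq0$, whence $w$ is analytic near $z_0$ with $|w(z_0)|=1$; thus $z_0$ is a first boundary-touching point of $w$, and Jack's lemma supplies a real number $k\ge n$ with $z_0w'(z_0)=kw(z_0)$.

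Next I would translate this back to $p$. Writing $w(z_0)=e^{i\theta}$ (with $\theta\neq\pi$, since $w$ never equals $-1$), a direct computation gives $p(z_0)=\dfrac{1-e^{i\theta}}{1+e^{i\theta}}=-\dfrac{i\sin\theta}{1+\cos\theta}$, so that $x=-\dfrac{\sin\theta}{1+\cos\theta}$ and hence $1+x^2=\dfrac{2}{1+\cos\theta}$. From $p'=\dfrac{-2w'}{(1+w)^2}$ together with $(1+e^{i\theta})^2=e^{i\theta}(2+2\cos\theta)$ one obtains
$$z_0p'(z_0)=\frac{-2\,z_0w'(z_0)}{\bigl(1+w(z_0)\bigr)^2}=\frac{-2k\,e^{i\theta}}{e^{i\theta}(2+2\cos\theta)}=\frac{-k}{1+\cos\theta}=-\frac{k}{2}\bigl(1+x^2\bigr),$$
which is a real number; since $k\ge n$ this yields $z_0p'(z_0)\le-\dfrac{n}{2}\bigl(1+x^2\bigr)$.

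Finally I would invoke admissibility: putting $y:=z_0p'(z_0)$, we have produced $x\in\mathbb{R}$ and $y\le-n(1+x^2)/2$ with
$$\psi\bigl(p(z_0),z_0p'(z_0);z_0\bigr)=\psi(ix,y;z_0),$$
and the hypothesis on $\psi$ forces $\psi(ix,y;z_0)\notin\Omega$, contradicting $\psi\bigl(p(z),zp'(z)\bigr)\in\Omega$ for all $z\in\mathrm{U}$. Hence $|w(z)|<1$ throughout $\mathrm{U}$, i.e.\ $\operatorname{Re}p(z)>0$ on $\mathrm{U}$. I expect the only genuinely delicate point to be the justification that Jack's lemma may be applied at the first point $z_0$ where $\operatorname{Re}p$ vanishes --- that is, upgrading ``positivity of $\operatorname{Re}p$ fails somewhere'' to a bona fide maximum-modulus situation for $w$ on a closed subdisk (achieved, for instance, by rescaling $w(z_0z)$ and a boundary Schwarz-type argument); everything after that is the routine trigonometric bookkeeping displayed above.
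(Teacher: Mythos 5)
The paper does not prove this lemma at all; it is quoted verbatim from Miller and Mocanu \cite{miller}, and your argument is in substance the proof given there: the Cayley transform $w=(1-p)/(1+p)$ reduces the half-plane statement to Jack's lemma for a function vanishing to order $n$ at the origin, and your trigonometric bookkeeping correctly yields $z_0p'(z_0)=-\tfrac{k}{2}(1+x^2)\le-\tfrac{n}{2}(1+x^2)$. The one point you flag as delicate is settled by the same minimality argument you already invoke: since $\operatorname{Re}p(0)=1$ and $|z_0|$ is minimal among the zeros of $\operatorname{Re}p$, continuity forces $\operatorname{Re}p\geq0$ on the entire circle $|z|=|z_0|$, hence $p\neq-1$ and $|w|\leq1$ on the closed disk $|z|\leq|z_0|$, so $z_0$ is a bona fide maximum point of $|w|$ to which Jack's lemma (in the order-$n$ form giving $k\geq n$) applies directly, with no rescaling needed.
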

%************************************************
\section{Main Results}
\setcounter{equation}{0}

\begin{theorem}\label{thm1}
Let $f\in\mathcal{A}(p,n)$ such that $\mathcal{F}_\lambda(z)\mathcal{F}_\lambda^\prime(z)\neq0$ for all $z\in\dot{\mathrm{U}}$, where $\mathcal{F}_\lambda$ is given by \eqref{defFlambda}, and let $\eta,\mu\in\mathbb{R}$. If
\begin{equation}\label{assumpt1thm1}
\operatorname{Re}\mathcal{J}_\lambda^p(\mu,\eta)f(z)<p(\mu+\eta)+
\frac{nM}{M+p^\eta(1+\lambda(p-1))^{\eta+\mu}},\;z\in\mathrm{U},
\end{equation}
where $M\geq p^\eta(1+\lambda(p-1))^{\eta+\mu}$, then
\begin{equation}\label{conclthm1}
\left|\left(\frac{\mathcal{F}_\lambda(z)}{z^p}\right)^\mu
\left(\frac{\mathcal{F}_\lambda^\prime(z)}{z^{p-1}}\right)^\eta-
p^\eta(1+\lambda(p-1))^{\eta+\mu}\right|<M,\;z\in\mathrm{U}.
\end{equation}
(All the powers are the principal ones.)
\end{theorem}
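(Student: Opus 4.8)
The plan is to apply Lemma~\ref{lemma1} to a translate of the function
$$q(z):=\left(\frac{\mathcal{F}_\lambda(z)}{z^p}\right)^\mu\left(\frac{\mathcal{F}_\lambda^\prime(z)}{z^{p-1}}\right)^\eta,$$
so the first step is to set $q$ up properly. From $f\in\mathcal{A}(p,n)$ one computes $\mathcal{F}_\lambda(z)=(1+\lambda(p-1))z^p+\sum_{k\ge p+n}(1+\lambda(k-1))a_kz^k$, hence $\mathcal{F}_\lambda(z)/z^p$ and $\mathcal{F}_\lambda^\prime(z)/z^{p-1}$ are analytic on $\mathrm{U}$ with the nonzero values $1+\lambda(p-1)$ and $p(1+\lambda(p-1))$ at the origin; combined with the hypothesis $\mathcal{F}_\lambda\mathcal{F}_\lambda^\prime\neq0$ on $\dot{\mathrm{U}}$ they are nowhere zero on $\mathrm{U}$, so with principal powers $q$ is a well-defined, nonvanishing element of $\mathcal{H}(\mathrm{U})$, and a short expansion gives $q\in\mathcal{H}[a,n]$ with $a:=p^\eta(1+\lambda(p-1))^{\eta+\mu}$. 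Putting $P:=q-a\in\mathcal{H}[0,n]$, the conclusion \eqref{conclthm1} becomes exactly $|P(z)|<M$ for $z\in\mathrm{U}$.

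Next I would establish the logarithmic-derivative identity linking $q$ to the operator. Writing $\log q=\mu\log\!\big(\mathcal{F}_\lambda(z)/z^p\big)+\eta\log\!\big(\mathcal{F}_\lambda^\prime(z)/z^{p-1}\big)$ and differentiating yields
$$\frac{zq^\prime(z)}{q(z)}=\mu\,\frac{z\mathcal{F}_\lambda^\prime(z)}{\mathcal{F}_\lambda(z)}+\eta\left(1+\frac{z\mathcal{F}_\lambda^{\prime\prime}(z)}{\mathcal{F}_\lambda^\prime(z)}\right)-p(\mu+\eta)=\mathcal{J}_\lambda^p(\mu,\eta)f(z)-p(\mu+\eta).$$
Since $p(\mu+\eta)$ is real, hypothesis \eqref{assumpt1thm1} then rewrites as
$$\operatorname{Re}\frac{zP^\prime(z)}{P(z)+a}<\frac{nM}{M+p^\eta(1+\lambda(p-1))^{\eta+\mu}}=\frac{nM}{M+a},\qquad z\in\mathrm{U},$$
that is, $\psi\big(P(z),zP^\prime(z);z\big)\in\Omega$ for the choices $\psi(r,s;z):=\dfrac{s}{r+a}$ and $\Omega:=\left\{w\in\mathbb{C}:\operatorname{Re}w<\dfrac{nM}{M+a}\right\}$.

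Finally, the core of the argument is to verify the admissibility condition of Lemma~\ref{lemma1}: for all $\theta\in\mathbb{R}$ and $K\ge Mn$ (with $Me^{i\theta}+a\neq0$) one needs $\psi(Me^{i\theta},Ke^{i\theta};z)\notin\Omega$, i.e.
$$\operatorname{Re}\frac{Ke^{i\theta}}{Me^{i\theta}+a}=\frac{K(M+a\cos\theta)}{M^2+2aM\cos\theta+a^2}\ge\frac{nM}{M+a}.$$
Because $M\ge a>0$ we have $M+a\cos\theta\ge0$, and using $K\ge Mn$ the left side is at least $\dfrac{nM(M+a\cos\theta)}{M^2+2aM\cos\theta+a^2}$; clearing the strictly positive denominators, the required inequality reduces to $a(M-a)(1-\cos\theta)\ge0$, which is immediate. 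The boundary case $Me^{i\theta}+a=0$ lies outside the domain of $\psi$, so the condition is vacuous there. An application of Lemma~\ref{lemma1} then gives $|P(z)|<M$ on $\mathrm{U}$, which is \eqref{conclthm1}. I expect the only genuinely delicate points to be this admissibility computation and the care needed with the principal branches and the removable singularity of $q$ at the origin; the rest is routine bookkeeping.
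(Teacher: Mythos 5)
Your proposal is correct and follows essentially the same route as the paper: the same translated function ($P=h$), the same identity $\mathcal{J}_\lambda^p(\mu,\eta)f(z)=p(\mu+\eta)+zP'(z)/(P(z)+a)$, and the same application of Lemma~\ref{lemma1} with the half-plane $\Omega$; your admissibility check, reducing to $a(M-a)(1-\cos\theta)\ge0$, is just a more explicit version of the paper's one-line estimate.
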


\begin{proof}
Let define the function $h$ by
$$h(z)=\left(\frac{\mathcal{F}_\lambda(z)}{z^p}\right)^\mu
\left(\frac{\mathcal{F}_\lambda^\prime(z)}{z^{p-1}}\right)^\eta-
p^\eta(1+\lambda(p-1))^{\eta+\mu}.$$

From the assumptions $f\in\mathcal{A}(p,n)$ with $\mathcal{F}_\lambda^\prime(z)\mathcal{F}_\lambda(z)\neq0$ for all $z\in\dot{\mathrm{U}}$, we have that $h\in\mathcal{H}[0,n]$, and a simple computation shows that
\begin{equation}\label{eq1thm1}
\mathcal{J}_\lambda^p(\mu,\eta)f(z)=p(\mu+\eta)+\frac{zh'(z)}{h(z)+p^\eta(1+\lambda(p-1))^{\eta+\mu}}.
\end{equation}
Letting
$$\psi(r,s;z)=p(\mu+\eta)+\frac{s}{r+p^\eta(1+\lambda(p-1))^{\eta+\mu}}$$
and
$$\Omega=\left\{w\in\mathbb{C}:\operatorname{Re}w<p(\mu+\eta)+
\frac{nM}{M+p^\eta(1+\lambda(p-1))^{\eta+\mu}}\right\},$$
from \eqref{eq1thm1} combined with the assumption \eqref{assumpt1thm1} we obtain that $$\psi(h(z),zh'(z);z)=\mathcal{J}_\lambda^p(\mu,\eta)f(z)\in\Omega\quad\text{for all}\quad z\in\mathrm{U}.$$
Further, for any $\theta\in\mathbb{R}$ and $K\geq nM$, since $M\geq p^\eta(1+\lambda(p-1))^{\eta+\mu}$ we also have
\begin{eqnarray*}
&\displaystyle\operatorname{Re}\psi\left(Me^{i\theta},Ke^{i\theta};z\right)=p(\mu+\eta)+K\operatorname{Re}
\frac{1}{M+e^{-i\theta}p^\eta(1+\lambda(p-1))^{\eta+\mu}}\\
&\displaystyle\geq p(\mu+\eta)+\frac{nM}{M+p^\eta(1+\lambda(p-1))^{\eta+\mu}},\;z\in\mathrm{U},
\end{eqnarray*}
that is $\psi\left(Me^{i\theta},Ke^{i\theta};z\right)\not\in\Omega$ for all $K\geq Mn$, $\theta\in\mathbb{R}$, and $z\in\mathrm{U}$. Therefore, according to Lemma \ref{lemma1} we obtain $|h(z)|<M$ for all $z\in\mathrm{U}$, hence the conclusion \eqref{conclthm1} is proved.
\end{proof}

\begin{theorem} \label{thm2}
Let $f\in\mathcal{A}(p,n)$ such that $\mathcal{F}_\lambda(z)\mathcal{F}_\lambda^\prime(z)\neq0$ for all $z\in\dot{\mathrm{U}}$, where $\mathcal{F}_\lambda$ is given by \eqref{defFlambda}, and let $\eta,\mu\in\mathbb{R}$. If
\begin{equation}\label{assumpt1thm2}
\operatorname{Re}\mathcal{J}_\lambda^p(\mu,\eta)f(z)>k(\mu,\eta,\lambda;\delta),\;z\in\mathrm{U},
\end{equation}
where $\delta\in\left[0,p^\eta(1+\lambda(p-1))^{\eta+\mu}\right)$ and
$$k(\mu,\eta,\lambda;\delta)=\left\{
\begin{array}{l}
p(\mu+\eta)-\frac{n\delta}{2\left[p^\eta(1+\lambda(p-1))^{\eta+\mu}-\delta\right]},\;\text{if}\\
\hspace{51mm}\delta\in\left[0,\frac{p^\eta(1+\lambda(p-1))^{\eta+\mu}}{2}\right],\\
p(\mu+\eta)-\frac{n\left[p^\eta(1+\lambda(p-1))^{\eta+\mu}-\delta\right]}{2\delta},\;\text{if}\\
\hspace{18mm}
\delta\in\left[\frac{p^\eta(1+\lambda(p-1))^{\eta+\mu}}{2},p^\eta(1+\lambda(p-1))^{\eta+\mu}\right),
\end{array}\right.$$
then $f\in\mathcal{N}^{\lambda}_{p,n}(\mu,\eta;\delta)$.
\end{theorem}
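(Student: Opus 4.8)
The plan is to mimic the proof of Theorem \ref{thm1}, but using Lemma \ref{lemma2} (the Re-positivity lemma) instead of Lemma \ref{lemma1}, after a suitable normalisation. First I would set $a:=p^\eta(1+\lambda(p-1))^{\eta+\mu}$ for brevity, and define
$$q(z)=\frac{1}{a-\delta}\left[\left(\frac{\mathcal{F}_\lambda(z)}{z^p}\right)^\mu
\left(\frac{\mathcal{F}_\lambda'(z)}{z^{p-1}}\right)^\eta-\delta\right].$$
As in Theorem \ref{thm1}, the hypotheses $f\in\mathcal{A}(p,n)$ and $\mathcal{F}_\lambda(z)\mathcal{F}_\lambda'(z)\neq0$ on $\dot{\mathrm U}$ force
$\left(\mathcal{F}_\lambda(z)/z^p\right)^\mu\left(\mathcal{F}_\lambda'(z)/z^{p-1}\right)^\eta$ to lie in $\mathcal{H}[a,p+n]\subset\mathcal{H}[a,n]$, so $q\in\mathcal{H}[1,n]$; and showing $\operatorname{Re}q(z)>0$ on $\mathrm U$ is exactly the conclusion $f\in\mathcal{N}^\lambda_{p,n}(\mu,\eta;\delta)$. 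A direct computation, the same one used to derive \eqref{eq1thm1}, gives
$$\mathcal{J}_\lambda^p(\mu,\eta)f(z)=p(\mu+\eta)+\frac{(a-\delta)\,zq'(z)}{(a-\delta)q(z)+\delta}.$$

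Next I would choose $\Omega=\left\{w\in\mathbb{C}:\operatorname{Re}w>k(\mu,\eta,\lambda;\delta)\right\}$ and
$$\psi(r,s;z)=p(\mu+\eta)+\frac{(a-\delta)s}{(a-\delta)r+\delta},$$
so that \eqref{assumpt1thm2} says $\psi(q(z),zq'(z);z)\in\Omega$ for all $z\in\mathrm U$. To apply Lemma \ref{lemma2} I must verify $\psi(ix,y;z)\notin\Omega$ whenever $x\in\mathbb{R}$ and $y\le-\tfrac{n}{2}(1+x^2)$, i.e. that
$$\operatorname{Re}\psi(ix,y;z)=p(\mu+\eta)+\operatorname{Re}\frac{(a-\delta)y}{\delta+i(a-\delta)x}
=p(\mu+\eta)+\frac{(a-\delta)\,\delta\,y}{\delta^2+(a-\delta)^2x^2}\le k(\mu,\eta,\lambda;\delta).$$
Since $y\le-\tfrac{n}{2}(1+x^2)<0$ and $\delta\ge0$, $a-\delta>0$, the middle term is $\le0$, and using $y\le-\tfrac n2(1+x^2)$ it suffices to show
$$\frac{(a-\delta)\,\delta\,(1+x^2)}{\delta^2+(a-\delta)^2x^2}\ \ge\ \frac{2}{n}\bigl(p(\mu+\eta)-k(\mu,\eta,\lambda;\delta)\bigr)=:c$$
for all $x\in\mathbb{R}$ (the case $\delta=0$ being trivial, giving $\operatorname{Re}\psi(ix,y;z)=p(\mu+\eta)=k$). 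Thus I need the minimum over $x\in\mathbb{R}$ of the function $\varphi(t)=(a-\delta)\delta\,\dfrac{1+t}{\delta^2+(a-\delta)^2t}$ with $t=x^2\ge0$.

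The main obstacle — really the only computational point — is this one-variable minimisation. The function $\varphi$ is monotone in $t\ge0$: as $t$ ranges over $[0,\infty)$ it moves monotonically from $\varphi(0)=(a-\delta)/\delta$ toward $\varphi(\infty)=\delta/(a-\delta)$. Hence its infimum on $[0,\infty)$ is $\min\{(a-\delta)/\delta,\ \delta/(a-\delta)\}$, which equals $(a-\delta)/\delta$ when $\delta\le a/2$ and $\delta/(a-\delta)$ when $\delta\ge a/2$. These two cases are precisely the two branches in the definition of $k(\mu,\eta,\lambda;\delta)$: one checks directly that $\tfrac 2n(p(\mu+\eta)-k)=(a-\delta)/\delta$ in the first branch and $=\delta/(a-\delta)$ in the second. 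Therefore $\varphi(x^2)\ge c$ for all $x\in\mathbb{R}$, which gives $\operatorname{Re}\psi(ix,y;z)\le k(\mu,\eta,\lambda;\delta)$, i.e. $\psi(ix,y;z)\notin\Omega$. Lemma \ref{lemma2} then yields $\operatorname{Re}q(z)>0$ on $\mathrm U$, that is, $f\in\mathcal{N}^\lambda_{p,n}(\mu,\eta;\delta)$, completing the proof. I would present the $\delta=0$ degenerate case as a trivial remark and carry out the argument for $\delta\in(0,a)$.
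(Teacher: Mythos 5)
Your proposal is correct and follows essentially the same route as the paper: the same normalised function $h=q$, the same identity for $\mathcal{J}_\lambda^p(\mu,\eta)f$, the same admissible $\psi$ and $\Omega$, and the same reduction to minimising $(a-\delta)\delta\frac{1+x^2}{\delta^2+(a-\delta)^2x^2}$, with the two branches of $k$ arising as the limit at infinity and the value at $x=0$. One small transposition to fix: for $\delta\le a/2$ the Möbius function is \emph{decreasing}, so the infimum is $\delta/(a-\delta)$ (not $(a-\delta)/\delta$), which is exactly $\tfrac2n\bigl(p(\mu+\eta)-k\bigr)$ in the first branch — you have swapped the two values consistently in both places, so the errors cancel and the conclusion stands, but the labels should be corrected.
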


\begin{proof}
Let define the function $h$ by
$$h(z)=\frac{1}{p^\eta(1+\lambda(p-1))^{\eta+\mu}-\delta}
\left[\left(\frac{\mathcal{F}_\lambda(z)}{z^p}\right)^\mu
\left(\frac{\mathcal{F}_\lambda^\prime(z)}{z^{p-1}}\right)^\eta-\delta\right],$$
where all the powers are the principal ones. From the assumptions $f\in\mathcal{A}(p,n)$ with $\mathcal{F}_\lambda^\prime(z)\mathcal{F}_\lambda(z)\neq0$ for all $z\in\dot{\mathrm{U}}$, we have that $h\in\mathcal{H}[1,n]$, and we may easily show that
\begin{equation}\label{eqn1thm2}
\mathcal{J}_\lambda^p(\mu,\eta)f(z)=p(\mu+\eta)+
\frac{\left[p^\eta\left(1+\lambda(p-1)\right)^{\eta+\mu}-\delta\right]zh'(z)}
{\left[p^\eta(1+\lambda(p-1))^{\eta+\mu}-\delta\right]h(z)+\delta}.
\end{equation}
Further, if we let
$$\psi(r,s;z)=p(\mu+\eta)+\frac{\left[p^\eta(1+\lambda(p-1))^{\eta+\mu}-\delta\right]s}
{\left[p^\eta(1+\lambda(p-1))^{\eta+\mu}-\delta\right]r+\delta}$$
and
$$\Omega=\left\{w\in\mathbb{C}:\operatorname{Re}w>k(\mu,\eta,\lambda;\delta)\right\},$$
from \eqref{eqn1thm2} combined with the assumption \eqref{assumpt1thm2} we get $$\psi(h(z),zh'(z);z)=\mathcal{J}_\lambda^p(\mu,\eta)f\in\Omega\quad\text{for all}\quad z\in\mathrm{U}.$$
Also, for any $x\in\mathbb{R}$, $y\leq-n(1+x^2)/2$, and $z\in\mathrm{U}$, we have
\begin{eqnarray*}
&\operatorname{Re}\psi(ix,y;z)=p(\mu+\eta)+\frac{\left[p^\eta(1+\lambda(p-1))^{\eta+\mu}-\delta\right]y}
{\delta^2+\left[p^\eta(1+\lambda(p-1))^{\eta+\mu}-\delta\right]^2x^2}\leq\\ &p(\mu+\eta)-\frac{n\delta\left[p^\eta(1+\lambda(p-1))^{\eta+\mu}-\delta\right]}{2}
\frac{x^2+1}{\delta^2+\left[p^\eta(1+\lambda(p-1))^{\eta+\mu}-\delta\right]^2x^2}=:H(x)\leq\\
&k(\mu,\eta,\lambda;\delta)=\left\{
\begin{array}{l}
\lim\limits_{x\to+\infty}H(x),\;\text{if}\;
\delta\in\left[0,\frac{p^\eta(1+\lambda(p-1))^{\eta+\mu}}{2}\right],\\
H(0),\;\text{if}\;\delta\in\left[\frac{p^\eta(1+\lambda(p-1))^{\eta+\mu}}{2},
p^\eta(1+\lambda(p-1))^{\eta+\mu}\right).
\end{array}\right.\\
\end{eqnarray*}
This shows that $\psi(ix,y;z)\not\in\Omega$ for all $x\in\mathbb{R}$, $y\leq-n(1+x^2)/2$, and $z\in\mathrm{U}$. From Lemma \ref{lemma2} it follows that $\operatorname{Re}h(z)>0$ for all $z\in\mathrm{U}$, which proves the conclusion of the theorem.
\end{proof}
%************************************
\section{Corollaries and Consequences}
\setcounter{equation}{0}

In this section we will discuss some interesting consequences of our main theorems, that extend some previous results obtained in \cite{irmak1}.

Putting $\lambda=0$ in the Theorem \ref{thm1} and Theorem \ref{thm2} we get following corollaries:

\begin{corollary}\label{cor1}
Let $f\in\mathcal{A}(p,n)$ such that $f(z)f^\prime(z)\neq0$ for all $z\in\dot{\mathrm{U}}$, and let $\eta,\mu\in\mathbb{R}$. If
\begin{equation}\label{eqcor1a}
\operatorname{Re}\left[\mu\frac{zf'(z)}{f(z)}+\eta\left(1+\frac{zf''(z)}{f'(z)}\right)\right]<
p(\mu+\eta)+\frac{nM}{M+p^\eta},\;z\in\mathrm{U},
\end{equation}
where $M\geq p^\eta$, then
$$\left|\left(\frac{f(z)}{z^p}\right)^\mu\left(\frac{f^\prime(z)}{z^{p-1}}\right)^\eta-p^\eta\right|<
M,\;z\in\mathrm{U}.$$
(All the powers are the principal ones.)
\end{corollary}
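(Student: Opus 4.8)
The plan is to observe that Corollary~\ref{cor1} is simply Theorem~\ref{thm1} specialised to $\lambda=0$, so no new argument is required --- one only needs to check that every hypothesis and every conclusion of Theorem~\ref{thm1} reduces correctly under this substitution. First I would note that for $\lambda=0$ the auxiliary function becomes $\mathcal{F}_0(z)=(1-0)f(z)+0\cdot zf'(z)=f(z)$, and hence $\mathcal{F}_0'(z)=f'(z)$. Consequently the non-vanishing hypothesis ``$\mathcal{F}_\lambda(z)\mathcal{F}_\lambda'(z)\neq0$ for $z\in\dot{\mathrm U}$'' of Theorem~\ref{thm1} turns into ``$f(z)f'(z)\neq0$ for $z\in\dot{\mathrm U}$'', which is exactly the hypothesis of the corollary.

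Next I would track the operator $\mathcal{J}_\lambda^p(\mu,\eta)$ through the substitution. By its definition,
$$\mathcal{J}_0^p(\mu,\eta)f(z)=\mu\frac{zf'(z)}{f(z)}+\eta\left(1+\frac{zf''(z)}{f'(z)}\right),$$
so the left-hand side of \eqref{eqcor1a} is precisely $\operatorname{Re}\mathcal{J}_0^p(\mu,\eta)f(z)$. On the right-hand side, the constant $p^\eta(1+\lambda(p-1))^{\eta+\mu}$ collapses to $p^\eta(1+0)^{\eta+\mu}=p^\eta$, so the bound $p(\mu+\eta)+\dfrac{nM}{M+p^\eta(1+\lambda(p-1))^{\eta+\mu}}$ in \eqref{assumpt1thm1} becomes $p(\mu+\eta)+\dfrac{nM}{M+p^\eta}$, and the side condition $M\geq p^\eta(1+\lambda(p-1))^{\eta+\mu}$ becomes $M\geq p^\eta$. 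Thus hypothesis \eqref{assumpt1thm1} of Theorem~\ref{thm1} is, after the substitution $\lambda=0$, literally hypothesis \eqref{eqcor1a} of the corollary.

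Finally I would read off the conclusion: substituting $\lambda=0$ into \eqref{conclthm1} and using $\mathcal{F}_0=f$, $\mathcal{F}_0'=f'$, and $p^\eta(1+\lambda(p-1))^{\eta+\mu}=p^\eta$ yields exactly
$$\left|\left(\frac{f(z)}{z^p}\right)^\mu\left(\frac{f'(z)}{z^{p-1}}\right)^\eta-p^\eta\right|<M,\quad z\in\mathrm U,$$
which is the assertion of Corollary~\ref{cor1}. Hence the corollary follows immediately by applying Theorem~\ref{thm1} with $\lambda=0$. There is essentially no obstacle here; the only point requiring a line of care is verifying that the principal-branch powers behave consistently under the substitution, but since $\mathcal{F}_0\equiv f$ identically as analytic functions this is automatic, and the result is a direct corollary as claimed.
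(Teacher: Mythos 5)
Your proposal is correct and matches the paper exactly: the authors obtain Corollary \ref{cor1} precisely by setting $\lambda=0$ in Theorem \ref{thm1}, so that $\mathcal{F}_0=f$ and $p^\eta(1+\lambda(p-1))^{\eta+\mu}$ collapses to $p^\eta$. Your verification of how each hypothesis and the conclusion specialise is exactly the (implicit) argument in the paper.
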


\begin{example}\label{ex31}
Let $\mu,\eta\geq0$ with $\mu+\eta>0$, and let $a\in\mathbb{C}^*:=\mathbb{C}\setminus\{0\}$ such that
\begin{eqnarray}
&&|a|\leq\frac{p}{p+n},\;\left(n,p\in\mathbb{N}\right),\label{ex31a}\\
&&\mu\frac{|a|}{1+|a|}+\eta\frac{(p+n)|a|}{p+(p+n)|a|}\leq\frac{M}{M+p^\eta},\label{ex31b}
\end{eqnarray}
where $M\geq p^\eta$. Then,
$$\left|\left(1+az^n\right)^\mu\left[p+a\left(p+n\right)z^n\right]^\eta-p^\eta\right|<M,\;z\in\mathrm{U}.$$
\end{example}

\begin{proof}
Considering the function $f(z)=z^p+az^{p+n}$ with $a\in\mathbb{C}^*$, the assumption \eqref{ex31a} implies that $f(z)f^\prime(z)\neq0$ for all $z\in\dot{\mathrm{U}}$, while the condition \eqref{eqcor1a} reduces to
\begin{equation}\label{ineqex31aa}
\operatorname{Re}\left[\mu\frac{az^n}{1+az^n}+\eta\frac{a(p+n)z^n}{p+a(p+n)z^n}\right]<
\frac{M}{M+p^\eta},\;z\in\mathrm{U}.
\end{equation}

If we let $H(\zeta)=\dfrac{\zeta}{1+\zeta}$, with $|\zeta|\leq\rho\leq1$, it follows that
\begin{equation}\label{ineqReH}
-\frac{\rho}{1-\rho}\leq\operatorname{Re}H(\zeta)\leq\frac{\rho}{1+\rho},\;|\zeta|\leq\rho<1.
\end{equation}

According to this remark, we deduce that
$$\operatorname{Re}\left[\mu\frac{az^n}{1+az^n}+\eta\frac{a(p+n)z^n}{p+a(p+n)z^n}\right]<
\mu\frac{|a|}{1+|a|}+\eta\frac{(p+n)|a|}{p+(p+n)|a|},\;z\in\mathrm{U},$$
whenever $\mu,\eta\geq0$ and \eqref{ex31a} holds. Thus, the assumption \eqref{ex31b} implies that the inequality \eqref{ineqex31aa} holds, and from Corollary \ref{cor1} our result follows immediately.
\end{proof}

\begin{corollary}\label{cor2}
Let $f\in\mathcal{A}(p,n)$ such that $f(z)f^\prime(z)\neq0$ for all $z\in\dot{\mathrm{U}}$, and let $\eta,\mu\in\mathbb{R}$. If
$$\operatorname{Re}\left[\mu\frac{zf'(z)}{f(z)}+\eta\left(1+\frac{zf''(z)}{f'(z)}\right)\right]>
\nu(\eta,\mu;\delta),\;z\in\mathrm{U},$$
where $\delta\in\left[0,p^\eta\right)$ and
$$\nu(\eta,\mu;\delta):=k(\mu,\eta,0;\delta)=\left\{
\begin{array}{l}
p(\mu+\eta)-\frac{n\delta}{2\left(p^\eta-\delta\right)},\quad\text{if}\quad
\delta\in\left[0,\frac{p^\eta}{2}\right],\\[2mm]
p(\mu+\eta)-\frac{n\left(p^\eta-\delta\right)}{2\delta},\quad\text{if}\quad
\delta\in\left[\frac{p^\eta}{2},p^\eta\right),
\end{array}\right.$$
then
$$\operatorname{Re}\left[\left(\frac{f(z)}{z^p}\right)^\mu
\left(\frac{f'(z)}{z^{p-1}}\right)^\eta\right]>\delta,\;z\in\mathrm{U}.$$
(All the powers are the principal ones.)
\end{corollary}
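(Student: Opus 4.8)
The plan is to recognize that Corollary \ref{cor2} is nothing but the special case $\lambda=0$ of Theorem \ref{thm2}, so the proof consists only of checking that every quantity appearing in Theorem \ref{thm2} degenerates correctly. First I would observe that, by \eqref{defFlambda}, setting $\lambda=0$ gives $\mathcal{F}_0(z)=f(z)$ and hence $\mathcal{F}_0^\prime(z)=f^\prime(z)$; consequently the non-vanishing hypothesis $\mathcal{F}_\lambda(z)\mathcal{F}_\lambda^\prime(z)\neq0$ on $\dot{\mathrm{U}}$ becomes $f(z)f^\prime(z)\neq0$ on $\dot{\mathrm{U}}$, and the operator reduces to
$$\mathcal{J}_0^p(\mu,\eta)f(z)=\mu\,\frac{zf^\prime(z)}{f(z)}+\eta\left(1+\frac{zf^{\prime\prime}(z)}{f^\prime(z)}\right),$$
which is exactly the expression on the left-hand side of the hypothesis of the corollary.

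Next I would simplify the structural constant: since $1+\lambda(p-1)=1$ when $\lambda=0$, we get $p^\eta(1+\lambda(p-1))^{\eta+\mu}=p^\eta$. Therefore the admissible interval $\delta\in\left[0,p^\eta(1+\lambda(p-1))^{\eta+\mu}\right)$ becomes $\delta\in[0,p^\eta)$, the two breakpoints of the piecewise function $k(\mu,\eta,\lambda;\delta)$ both become $p^\eta/2$, and a term-by-term comparison shows $k(\mu,\eta,0;\delta)=\nu(\eta,\mu;\delta)$ on each of the sub-intervals $[0,p^\eta/2]$ and $[p^\eta/2,p^\eta)$. Finally, the conclusion $f\in\mathcal{N}^0_{p,n}(\mu,\eta;\delta)$ unwinds, via the definition of that class with $\lambda=0$, to precisely $\operatorname{Re}\left[(f(z)/z^p)^\mu(f^\prime(z)/z^{p-1})^\eta\right]>\delta$ for all $z\in\mathrm{U}$, with the principal branches of the powers, which agree with those in Theorem \ref{thm2} since $\mathcal{F}_0=f$.

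With these identifications the hypothesis of the corollary is identical to \eqref{assumpt1thm2} with $\lambda=0$, and its conclusion is identical to that of Theorem \ref{thm2}; the result therefore follows immediately by applying Theorem \ref{thm2}. I do not anticipate any genuine obstacle here — the only care needed is the bookkeeping of the piecewise bound $k(\mu,\eta,0;\delta)$ and confirming that the principal powers coincide, both of which are routine. (Alternatively, one could re-run the argument of Theorem \ref{thm2} directly via Lemma \ref{lemma2}, but the specialization is cleaner and avoids repetition.)
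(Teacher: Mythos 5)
Your proposal is correct and is exactly how the paper obtains Corollary \ref{cor2}: the authors state it as the specialization $\lambda=0$ of Theorem \ref{thm2}, with $\mathcal{F}_0=f$ and $p^\eta(1+\lambda(p-1))^{\eta+\mu}=p^\eta$, so $k(\mu,\eta,0;\delta)=\nu(\eta,\mu;\delta)$. Nothing further is needed.
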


Using a similar proof like to Example \ref{ex31}, the following result can be easily deduced from the above corollary:

\begin{example}\label{ex32}
Let $\mu,\eta\leq0$ with $\mu+\eta<0$, and let $a\in\mathbb{C}^*$ such that
\begin{eqnarray*}
&&|a|\leq\frac{p}{p+n},\;\left(n,p\in\mathbb{N}\right),\\
&&\mu\frac{|a|}{1+|a|}+\eta\frac{(p+n)|a|}{p+(p+n)|a|}\geq\frac{\delta}{2\left(\delta-p^\eta\right)},
\;\;if\;\;\delta\in\left[0,\frac{p^\eta}{2}\right],\\
&&\mu\frac{|a|}{1+|a|}+\eta\frac{(p+n)|a|}{p+(p+n)|a|}\geq\frac{\delta-p^\eta}{2\delta},
\;\;if\;\;\delta\in\left[\frac{p^\eta}{2},p^\eta\right),
\end{eqnarray*}
where $\delta\in\left[0,p^\eta\right)$. Then,
$$\operatorname{Re}\left\{\left(1+az^n\right)^\mu\left[p+a\left(p+n\right)z^n\right]^\eta\right\}>\delta,
\;z\in\mathrm{U}.$$
\end{example}

\begin{remark}\label{rem1}
Taking $p=1$ in the Corollaries \ref{cor1} and \ref{cor2}, we get a known result obtain by Irmak et al. \cite{irmak1}.
\end{remark}

Taking $\lambda=0$, $\mu=1-\gamma$, and $\eta=\gamma$ in Theorem \ref{thm1} and \ref{thm2}, respectively, we obtain the following special cases:

\begin{corollary}\label{cor3}
Let $f\in\mathcal{A}(p,n)$ such that $f(z)f^\prime(z)\neq0$ for all $z\in\dot{\mathrm{U}}$, and let $\gamma\in\mathbb{R}$. If
\begin{equation}\label{eqasscor3}
\operatorname{Re}\left[(1-\gamma)\frac{zf'(z)}{f(z)}+\gamma\left(1+\frac{zf''(z)}{f'(z)}\right)\right]<
p+\frac{nM}{M+p^\gamma},\;z\in\mathrm{U},
\end{equation}
where $M\geq p^\gamma$, then
$$\left|\left(\frac{f(z)}{z^p}\right)^{1-\gamma}\left(\frac{f^\prime(z)}{z^{p-1}}\right)^\gamma-
p^\gamma\right|<M,\;z\in\mathrm{U}.$$
(All the powers are the principal ones.)
\end{corollary}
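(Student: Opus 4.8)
The plan is to obtain Corollary \ref{cor3} as a direct specialization of Theorem \ref{thm1}, rather than reproving it from scratch. The key observation is that the differential operator $\mathcal{J}_\lambda^p(\mu,\eta)f$ with the particular choices $\lambda=0$, $\mu=1-\gamma$, $\eta=\gamma$ becomes precisely the expression appearing inside the $\operatorname{Re}$ on the left-hand side of \eqref{eqasscor3}. First I would note that when $\lambda=0$ we have $\mathcal{F}_0(z)=(1-0)f(z)+0\cdot zf'(z)=f(z)$, so $\mathcal{F}_0=f$, $\mathcal{F}_0'=f'$, and $\mathcal{F}_0''=f''$; hence the hypothesis $\mathcal{F}_\lambda(z)\mathcal{F}_\lambda'(z)\neq0$ on $\dot{\mathrm{U}}$ reduces to $f(z)f'(z)\neq0$ on $\dot{\mathrm{U}}$, matching the hypothesis of the corollary.

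Next I would substitute $\mu=1-\gamma$ and $\eta=\gamma$ into the definition of $\mathcal{J}_\lambda^p(\mu,\eta)f$, giving
$$\mathcal{J}_0^p(1-\gamma,\gamma)f(z)=(1-\gamma)\frac{zf'(z)}{f(z)}+\gamma\left(1+\frac{zf''(z)}{f'(z)}\right),$$
so that the assumption \eqref{assumpt1thm1} of Theorem \ref{thm1} becomes exactly \eqref{eqasscor3}. Here one uses that $\mu+\eta=(1-\gamma)+\gamma=1$, so the term $p(\mu+\eta)$ in \eqref{assumpt1thm1} becomes $p$, and the exponent $\eta+\mu=1$ while $1+\lambda(p-1)=1+0=1$, so the quantity $p^\eta(1+\lambda(p-1))^{\eta+\mu}$ collapses to $p^\gamma\cdot 1^1=p^\gamma$. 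This is why the threshold $M\geq p^\eta(1+\lambda(p-1))^{\eta+\mu}$ in Theorem \ref{thm1} turns into $M\geq p^\gamma$, and the bound $\dfrac{nM}{M+p^\eta(1+\lambda(p-1))^{\eta+\mu}}$ turns into $\dfrac{nM}{M+p^\gamma}$, matching the right-hand side of \eqref{eqasscor3} term by term.

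Finally, I would apply the conclusion \eqref{conclthm1} of Theorem \ref{thm1} with these same substitutions: since $\mathcal{F}_0=f$ and $\mathcal{F}_0'=f'$, and $p^\eta(1+\lambda(p-1))^{\eta+\mu}=p^\gamma$, the conclusion
$$\left|\left(\frac{\mathcal{F}_\lambda(z)}{z^p}\right)^\mu\left(\frac{\mathcal{F}_\lambda'(z)}{z^{p-1}}\right)^\eta-p^\eta(1+\lambda(p-1))^{\eta+\mu}\right|<M$$
becomes exactly
$$\left|\left(\frac{f(z)}{z^p}\right)^{1-\gamma}\left(\frac{f'(z)}{z^{p-1}}\right)^\gamma-p^\gamma\right|<M,\;z\in\mathrm{U},$$
which is the assertion of the corollary. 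The only point requiring a word of care — and the mild obstacle here — is the handling of the principal powers: when $\lambda=0$ the quantity $(1+\lambda(p-1))^{\eta+\mu}$ is $1^1=1$ and poses no branch ambiguity, and the factors $(f(z)/z^p)^{1-\gamma}$ and $(f'(z)/z^{p-1})^\gamma$ are well-defined principal powers precisely because $f(z)f'(z)\neq0$ on $\dot{\mathrm{U}}$ and both $f(z)/z^p$ and $f'(z)/z^{p-1}$ tend to a positive value ($1$ and $p$ respectively) as $z\to0$; so the composite function $h$ built in the proof of Theorem \ref{thm1} indeed lies in $\mathcal{H}[0,n]$ after this specialization, and the argument goes through verbatim. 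Hence no new computation is needed, and the corollary follows immediately from Theorem \ref{thm1}.
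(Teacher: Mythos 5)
Your proposal is correct and coincides with the paper's own derivation: the paper obtains Corollary \ref{cor3} precisely by setting $\lambda=0$, $\mu=1-\gamma$, $\eta=\gamma$ in Theorem \ref{thm1}, with no further argument given. Your verification that $\mathcal{F}_0=f$, $\mu+\eta=1$, and $p^\eta(1+\lambda(p-1))^{\eta+\mu}=p^\gamma$ is exactly the bookkeeping the paper leaves implicit.
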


\begin{example}\label{ex33}
Let $\gamma\geq0$, and let $a\in\mathbb{C}^*$ such that
\begin{eqnarray}
&&|a|\leq p,\;\left(p\in\mathbb{N}\right),\label{ex33a}\\
&&|a|+\gamma\frac{|a|}{p+|a|}\leq\frac{M}{M+p^\gamma},\label{ex33b}
\end{eqnarray}
where $M\geq p^\gamma$. Then,
$$\left|e^{az}\left(p+az\right)^\gamma-p^\gamma\right|<M,\;z\in\mathrm{U}.$$
\end{example}

\begin{proof}
For the function $f(z)=z^pe^{az}\in\mathcal{A}(p)$ with $a\in\mathbb{C}^*$, the assumption \eqref{ex33a} implies that $f(z)f^\prime(z)\neq0$ for all $z\in\dot{\mathrm{U}}$, and the condition \eqref{eqasscor3} reduces to
\begin{equation}\label{ineqex33aa}
\operatorname{Re}\left(az+\gamma\frac{az}{p+az}\right)<\frac{M}{M+p^\gamma},\;z\in\mathrm{U}.
\end{equation}

Since $\operatorname{Re}(az)<|a|$ for $z\in\mathrm{U}$, using the inequality \eqref{ineqReH} we deduce that
$$\operatorname{Re}\left(az+\gamma\frac{az}{p+az}\right)<
|a|+\gamma\frac{|a|}{p+|a|},\;z\in\mathrm{U},$$
whenever $\gamma\geq0$ and the condition \eqref{ex33a} is satisfied. The assumption \eqref{ex33b} implies that the inequality \eqref{ineqex33aa} holds, and from Corollary \ref{cor3} we obtain our result.
\end{proof}

\begin{corollary}\label{cor4}
Let $f\in\mathcal{A}(p,n)$ such that $f(z)f^\prime(z)\neq0$ for all $z\in\dot{\mathrm{U}}$, and let $\gamma\in\mathbb{R}$. If
$$\operatorname{Re}\left[(1-\gamma)\frac{zf'(z)}{f(z)}+\gamma\left(1+\frac{zf''(z)}{f'(z)}\right)\right]>
\xi(\gamma;\delta),\;z\in\mathrm{U},$$
where $\delta\in\left[0,p^\gamma\right)$ and
$$\xi(\gamma;\delta):=k(1-\gamma,\gamma,0;\delta)=\left\{
\begin{array}{l}
p-\frac{n\delta}{2\left(p^\gamma-\delta\right)},\quad\text{if}\quad
\delta\in\left[0,\frac{p^\gamma}{2}\right],\\[2mm]
p-\frac{n\left(p^\gamma-\delta\right)}{2\delta},\quad\text{if}\quad
\delta\in\left[\frac{p^\gamma}{2},p^\eta\right),
\end{array}\right.$$
then
$$\operatorname{Re}\left[\left(\frac{f(z)}{z^p}\right)^{1-\gamma}
\left(\frac{f'(z)}{z^{p-1}}\right)^\gamma\right]>\delta,\;z\in\mathrm{U}.$$
(All the powers are the principal ones.)
\end{corollary}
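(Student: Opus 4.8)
The plan is to derive Corollary~\ref{cor4} as the direct specialization of Theorem~\ref{thm2} to the parameter choice $\lambda=0$, $\mu=1-\gamma$, $\eta=\gamma$; no new analysis is needed, the argument being purely a matter of unwinding the notation.

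First I would record the effect of $\lambda=0$ on the ingredients of Theorem~\ref{thm2}. By \eqref{defFlambda} we have $\mathcal{F}_0(z)=f(z)$, hence $\mathcal{F}_0'(z)=f'(z)$, so the standing hypothesis $\mathcal{F}_\lambda(z)\mathcal{F}_\lambda'(z)\neq0$ on $\dot{\mathrm{U}}$ becomes exactly the assumption $f(z)f'(z)\neq0$ on $\dot{\mathrm{U}}$ of the corollary. The operator then reads
$$\mathcal{J}_0^p(\mu,\eta)f(z)=\mu\frac{zf'(z)}{f(z)}+\eta\left(1+\frac{zf''(z)}{f'(z)}\right),$$
and the further substitution $\mu=1-\gamma$, $\eta=\gamma$ turns the left side of \eqref{assumpt1thm2} into the expression $\operatorname{Re}\left[(1-\gamma)\frac{zf'(z)}{f(z)}+\gamma\left(1+\frac{zf''(z)}{f'(z)}\right)\right]$ appearing in the hypothesis of Corollary~\ref{cor4}.

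Next I would evaluate the two parameter-dependent quantities of Theorem~\ref{thm2}. With $\lambda=0$ and $\eta=\gamma$ one has $p^\eta(1+\lambda(p-1))^{\eta+\mu}=p^\gamma\cdot 1^{\eta+\mu}=p^\gamma$, so the admissible interval $\delta\in\left[0,p^\eta(1+\lambda(p-1))^{\eta+\mu}\right)$ becomes $\delta\in[0,p^\gamma)$; and $p(\mu+\eta)=p$. Plugging $p^\gamma$ and $p$ into the piecewise formula for $k(\mu,\eta,\lambda;\delta)$ yields precisely $\xi(\gamma;\delta)$ as defined in the corollary — the branch $p-\frac{n\delta}{2(p^\gamma-\delta)}$ for $\delta\in[0,p^\gamma/2]$ and the branch $p-\frac{n(p^\gamma-\delta)}{2\delta}$ for $\delta\in[p^\gamma/2,p^\gamma)$.

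Finally, the conclusion $f\in\mathcal{N}^{\lambda}_{p,n}(\mu,\eta;\delta)$ of Theorem~\ref{thm2}, read with $\lambda=0$, $\mu=1-\gamma$, $\eta=\gamma$, is by the very definition of the class $\mathcal{N}^{\lambda}_{p,n}$ the assertion
$$\operatorname{Re}\left[\left(\frac{f(z)}{z^p}\right)^{1-\gamma}\left(\frac{f'(z)}{z^{p-1}}\right)^{\gamma}\right]>\delta,\;z\in\mathrm{U},$$
with all powers principal, which is exactly the conclusion claimed. There is no real obstacle here: one need only check that $\gamma\in\mathbb{R}$ indeed forces $\mu=1-\gamma$ and $\eta=\gamma$ to lie in $\mathbb{R}$ as required by Theorem~\ref{thm2}, and that the principal branch used in defining the auxiliary function $h$ within the proof of Theorem~\ref{thm2} agrees with the principal branch in the corollary's conclusion — both trivially true — so the entire content is already contained in Theorem~\ref{thm2}.
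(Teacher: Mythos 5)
Your proposal is correct and coincides with the paper's own derivation: Corollary~\ref{cor4} is obtained there exactly by setting $\lambda=0$, $\mu=1-\gamma$, $\eta=\gamma$ in Theorem~\ref{thm2}, and your verification that $\mathcal{F}_0=f$, $p^\eta(1+\lambda(p-1))^{\eta+\mu}=p^\gamma$, $p(\mu+\eta)=p$, and $k(1-\gamma,\gamma,0;\delta)=\xi(\gamma;\delta)$ is all that is needed. (Incidentally, your reading also silently corrects the obvious typo $p^\eta$ for $p^\gamma$ in the second branch of the corollary's displayed interval.)
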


Like in the proof of Example \ref{ex33}, from the above corollary we obtain the next special case:

\begin{example}\label{ex34}
Let $\gamma\geq0$, and let $a\in\mathbb{C}^*$ such that
\begin{eqnarray*}
&&|a|\leq p,\;\left(p\in\mathbb{N}\right),\\
&&-|a|+\gamma\frac{|a|}{p+|a|}\geq\frac{\delta}{2\left(\delta-p^\gamma\right)},
\;\;if\;\;\delta\in\left[0,\frac{p^\gamma}{2}\right],\\
&&-|a|+\gamma\frac{|a|}{p+|a|}\geq\frac{\delta-p^\gamma}{2\delta},
\;\;if\;\;\delta\in\left[\frac{p^\gamma}{2},p^\gamma\right),
\end{eqnarray*}
where $\delta\in\left[0,p^\gamma\right)$. Then,
$$\operatorname{Re}\left\{e^{az}(p+az)^\gamma\right\}>\delta,\;z\in\mathrm{U}.$$
\end{example}

\begin{remarks}
1. For $p=1$ and $M=\gamma+1$, Corollary \ref{cor3} reduces to a known result due to Irmak et al. \cite{irmak1}.

2. For $p=1$, Corollary \ref{cor4} reduces to a known result due to Irmak et al. \cite{irmak1}.
\end{remarks}

Upon taking $\lambda=1$ in Theorem \ref{thm1} and Theorem \ref{thm2} we get the next special cases:

\begin{corollary}\label{cor5}
Let $f\in\mathcal{A}(p,n)$ such that $f^\prime(z)\left(f^\prime(z)+zf^{\prime\prime}(z)\right)\neq0$ for all $z\in\dot{\mathrm{U}}$, and let $\eta,\mu\in\mathbb{R}$. If
$$\operatorname{Re}\mathcal{J}_1^p(\mu,\eta)f(z)<p(\mu+\eta)+\frac{nM}{M+p^{2\eta+\mu}},\;z\in\mathrm{U},$$
where $M\geq p^{2\eta+\mu}$, then
$$\left|\left(\frac{f^\prime(z)}{z^{p-1}}\right)^\mu
\left(\frac{f^\prime(z)+zf^{\prime\prime}(z)}{z^{p-1}}\right)^\eta-p^{2\eta+\mu}\right|<M,
\;z\in\mathrm{U}.$$
(All the powers are the principal ones.)
\end{corollary}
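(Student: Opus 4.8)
The plan is to obtain Corollary \ref{cor5} as the special case $\lambda=1$ of Theorem \ref{thm1}, so essentially no new analytic argument is needed; the work is entirely bookkeeping of how the operator $\mathcal{J}_\lambda^p(\mu,\eta)$ and the quantities in \eqref{conclthm1} specialize. First I would compute $\mathcal{F}_1$ from the definition \eqref{defFlambda}: with $\lambda=1$ we get $\mathcal{F}_1(z)=(1-1)f(z)+1\cdot zf^\prime(z)=zf^\prime(z)$, hence $\mathcal{F}_1^\prime(z)=f^\prime(z)+zf^{\prime\prime}(z)$. Thus the hypothesis $\mathcal{F}_\lambda(z)\mathcal{F}_\lambda^\prime(z)\neq0$ on $\dot{\mathrm{U}}$ becomes $zf^\prime(z)\bigl(f^\prime(z)+zf^{\prime\prime}(z)\bigr)\neq0$, which on $\dot{\mathrm{U}}$ (where $z\neq0$) is equivalent to $f^\prime(z)\bigl(f^\prime(z)+zf^{\prime\prime}(z)\bigr)\neq0$, matching the stated hypothesis of the corollary.

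Next I would substitute $\lambda=1$ into the constant $p^\eta(1+\lambda(p-1))^{\eta+\mu}$: since $1+1\cdot(p-1)=p$, this becomes $p^\eta\cdot p^{\eta+\mu}=p^{2\eta+\mu}$. This immediately converts the threshold $M\geq p^\eta(1+\lambda(p-1))^{\eta+\mu}$ into $M\geq p^{2\eta+\mu}$, the bound $p(\mu+\eta)+\frac{nM}{M+p^\eta(1+\lambda(p-1))^{\eta+\mu}}$ in \eqref{assumpt1thm1} into $p(\mu+\eta)+\frac{nM}{M+p^{2\eta+\mu}}$, and the subtracted constant in \eqref{conclthm1} into $p^{2\eta+\mu}$. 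Then I would rewrite the two factors appearing in \eqref{conclthm1}: $\dfrac{\mathcal{F}_1(z)}{z^p}=\dfrac{zf^\prime(z)}{z^p}=\dfrac{f^\prime(z)}{z^{p-1}}$ and $\dfrac{\mathcal{F}_1^\prime(z)}{z^{p-1}}=\dfrac{f^\prime(z)+zf^{\prime\prime}(z)}{z^{p-1}}$, so the expression whose modulus is bounded becomes exactly $\left(\dfrac{f^\prime(z)}{z^{p-1}}\right)^\mu\left(\dfrac{f^\prime(z)+zf^{\prime\prime}(z)}{z^{p-1}}\right)^\eta-p^{2\eta+\mu}$, as claimed.

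Finally, I should note that $\mathcal{J}_1^p(\mu,\eta)f$ is by definition just $\mathcal{J}_\lambda^p(\mu,\eta)f$ with $\lambda=1$, so the hypothesis $\operatorname{Re}\mathcal{J}_1^p(\mu,\eta)f(z)<p(\mu+\eta)+\frac{nM}{M+p^{2\eta+\mu}}$ is precisely \eqref{assumpt1thm1} for $\lambda=1$ (one could optionally expand it using $\mathcal{F}_1=zf^\prime$, but it is cleaner to leave it in operator form as the corollary does). Having matched every hypothesis and both sides of the conclusion, the corollary follows directly by invoking Theorem \ref{thm1} with $\lambda=1$. The only thing to be slightly careful about is the "all powers are principal" clause — the substitutions above are purely algebraic rewritings of $\mathcal{F}_1(z)/z^p$ and $\mathcal{F}_1^\prime(z)/z^{p-1}$, so the principal branches are inherited unchanged from Theorem \ref{thm1}, and no branch issue arises. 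I do not anticipate any genuine obstacle; the only mild point of attention is verifying the exponent arithmetic $p^\eta(1+\lambda(p-1))^{\eta+\mu}\big|_{\lambda=1}=p^{2\eta+\mu}$ and confirming that the nonvanishing hypothesis transfers correctly after cancelling the factor $z$.
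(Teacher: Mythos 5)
Your proposal is correct and is exactly the paper's route: the authors obtain Corollary \ref{cor5} by setting $\lambda=1$ in Theorem \ref{thm1}, and your verifications that $\mathcal{F}_1(z)=zf^\prime(z)$, that the nonvanishing hypothesis transfers after cancelling $z$, and that $p^\eta(1+\lambda(p-1))^{\eta+\mu}\big|_{\lambda=1}=p^{2\eta+\mu}$ are precisely the bookkeeping implicit in that specialization.
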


\begin{corollary}\label{cor6}
Let $f\in\mathcal{A}(p,n)$ such that $f^\prime(z)\left(f^\prime(z)+zf^{\prime\prime}(z)\right)\neq0$ for all $z\in\dot{\mathrm{U}}$, and let $\eta,\mu\in\mathbb{R}$. If
$$\operatorname{Re}\mathcal{J}_1^p(\mu,\eta)f(z)>\sigma(\mu,\eta;\delta),\;z\in\mathrm{U},$$
where $\delta\in\left[0,p^{2\eta+\mu}\right)$ and
$$\sigma(\mu,\eta;\delta):=k(\mu,\eta,1;\delta)=\left\{
\begin{array}{l}
p(\mu+\eta)-\frac{n\delta}{2\left(p^{2\eta+\mu}-\delta\right)},\quad\text{if}\quad
\delta\in\left[0,\frac{p^{2\eta+\mu}}{2}\right],\\
p(\mu+\eta)-\frac{n\left(p^{2\eta+\mu}-\delta\right)}{2\delta},\quad\text{if}\quad
\delta\in\left[\frac{p^{2\eta+\mu}}{2},p^{2\eta+\mu}\right),
\end{array}\right.$$
then
$$\operatorname{Re}\left[\left(\frac{f^\prime(z)}{z^{p-1}}\right)^\mu
\left(\frac{f^\prime(z)+zf^{\prime\prime}(z)}{z^{p-1}}\right)^\eta\right]>\delta,\;z\in\mathrm{U}.$$
(All the powers are the principal ones.)
\end{corollary}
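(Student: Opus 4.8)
The plan is to obtain Corollary \ref{cor6} directly from Theorem \ref{thm2} by setting $\lambda=1$; the whole task is then to check that every object appearing in Theorem \ref{thm2} specializes correctly. First I would record that, for $\lambda=1$, the function in \eqref{defFlambda} becomes $\mathcal{F}_1(z)=zf^\prime(z)$, so that $\mathcal{F}_1^\prime(z)=f^\prime(z)+zf^{\prime\prime}(z)$ and $\mathcal{F}_1^{\prime\prime}(z)=2f^{\prime\prime}(z)+zf^{\prime\prime\prime}(z)$. Since $z\neq0$ on $\dot{\mathrm{U}}$, the requirement $\mathcal{F}_1(z)\mathcal{F}_1^\prime(z)\neq0$ on $\dot{\mathrm{U}}$ is equivalent to $f^\prime(z)\bigl(f^\prime(z)+zf^{\prime\prime}(z)\bigr)\neq0$ on $\dot{\mathrm{U}}$, which is exactly the standing hypothesis of the corollary.

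Next I would carry out the three remaining reductions. The constant governing the problem becomes $p^\eta(1+\lambda(p-1))^{\eta+\mu}=p^\eta p^{\eta+\mu}=p^{2\eta+\mu}$, so the admissible interval $\delta\in[0,p^\eta(1+\lambda(p-1))^{\eta+\mu})$ collapses to $\delta\in[0,p^{2\eta+\mu})$, and $k(\mu,\eta,1;\delta)$ becomes precisely the piecewise expression $\sigma(\mu,\eta;\delta)$ in the statement. The operator $\mathcal{J}_\lambda^p(\mu,\eta)$ specializes to $\mathcal{J}_1^p(\mu,\eta)$ by definition. Finally, using $\mathcal{F}_1(z)/z^p=f^\prime(z)/z^{p-1}$ and $\mathcal{F}_1^\prime(z)/z^{p-1}=\bigl(f^\prime(z)+zf^{\prime\prime}(z)\bigr)/z^{p-1}$, the quantity whose real part is controlled in Theorem \ref{thm2} becomes $\left(\dfrac{f^\prime(z)}{z^{p-1}}\right)^\mu\left(\dfrac{f^\prime(z)+zf^{\prime\prime}(z)}{z^{p-1}}\right)^\eta$.

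With these identifications, the hypothesis of Corollary \ref{cor6} coincides verbatim with hypothesis \eqref{assumpt1thm2} of Theorem \ref{thm2} for $\lambda=1$, and the conclusion $f\in\mathcal{N}^1_{p,n}(\mu,\eta;\delta)$ of Theorem \ref{thm2} unwinds, via the definition of that class, exactly to the asserted inequality $\operatorname{Re}\left[\left(f^\prime(z)/z^{p-1}\right)^\mu\left(\bigl(f^\prime(z)+zf^{\prime\prime}(z)\bigr)/z^{p-1}\right)^\eta\right]>\delta$ on $\mathrm{U}$. Hence Theorem \ref{thm2} applies and the corollary follows. I do not expect a genuine obstacle: the proof is essentially a substitution together with the elementary algebra above. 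The one point deserving a line of care is the legitimacy of the principal powers: writing $f(z)=z^p+\sum_{k\geq p+n}a_kz^k$ one checks $f^\prime(z)/z^{p-1}=p+\dots$ and $\bigl(f^\prime(z)+zf^{\prime\prime}(z)\bigr)/z^{p-1}=p^2+\dots$, so both factors are analytic and non-vanishing near the origin and their product equals $p^{2\eta+\mu}$ there, which is exactly what makes the auxiliary function $h$ of Theorem \ref{thm2} belong to $\mathcal{H}[1,n]$ in this specialization.
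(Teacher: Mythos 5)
Your proposal is correct and matches the paper's own derivation: the paper obtains Corollary \ref{cor6} simply by setting $\lambda=1$ in Theorem \ref{thm2}, and your verification that $\mathcal{F}_1(z)=zf'(z)$, that $p^\eta(1+\lambda(p-1))^{\eta+\mu}$ collapses to $p^{2\eta+\mu}$, and that the nonvanishing hypothesis and the conclusion unwind as stated is exactly the substitution the authors leave implicit. The additional remark about the principal powers and $h\in\mathcal{H}[1,n]$ is a welcome bit of care that the paper omits.
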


If we let $f(z)=z^p+az^{p+n}$ with $a\in\mathbb{C}^*$, we easily deduce that $f^\prime(z)\left(f^\prime(z)+zf^{\prime\prime}(z)\right)\neq0$ for all $z\in\dot{\mathrm{U}}$ whenever $|a|\leq\dfrac{p^2}{(p+n)^2}$. Using the fact that $\mathcal{F}_1(z)=zf'(z)$, after some simple computations, from the above two corollaries we get respectively:

\begin{example}\label{ex35}
Let $\mu,\eta\in\mathbb{R}$, and let $a\in\mathbb{C}^*$ with $|a|\leq\dfrac{p^2}{(p+n)^2}$, $n\in\mathbb{N}$, such that
$$\operatorname{Re}\left[(\mu+\eta)\varphi(z)+\frac{z\varphi'(z)}{n\varphi(z)+p}\right]<
\frac{M}{M+p^{2\eta+\mu}},\;z\in\mathrm{U},$$
where $M\geq p^{2\eta+\mu}$ and $\varphi(z)=\dfrac{a(p+n)z^n}{p+(p+n)z^n}$. Then,
$$\left|\left[p+a(p+n)z^n\right]^\mu\left[p^2+a(p+n)^2z^n\right]^\eta-p^{2\eta+\mu}\right|<M,
\;z\in\mathrm{U}.$$
\end{example}

\begin{example}\label{ex36}
Let $\mu,\eta\in\mathbb{R}$, and let $a\in\mathbb{C}^*$ with $|a|\leq\dfrac{p^2}{(p+n)^2}$, $n\in\mathbb{N}$, such that
\begin{align*}
&\operatorname{Re}\left[(\mu+\eta)\varphi(z)+\dfrac{z\varphi'(z)}{n\varphi(z)+p}\right]>
\dfrac{\delta}{2\left(\delta-p^{2\eta+\mu}\right)},\;z\in\mathrm{U},
\;\;if\;\;\delta\in\left[0,\frac{p^{2\eta+\mu}}{2}\right],\\
&\operatorname{Re}\left[(\mu+\eta)\varphi(z)+\dfrac{z\varphi'(z)}{n\varphi(z)+p}\right]>
\dfrac{\delta-p^{2\eta+\mu}}{2\delta},\;z\in\mathrm{U},
\;\;if\;\;\delta\in\left[\frac{p^{2\eta+\mu}}{2},p^{2\eta+\mu}\right),
\end{align*}
where $\delta\in\left[0,p^{2\eta+\mu}\right)$ and $\varphi(z)=\dfrac{a(p+n)z^n}{p+(p+n)z^n}$. Then,
$$\operatorname{Re}\left\{\left[p+a(p+n)z^n\right]^\mu\left[p^2+a(p+n)^2z^n\right]^\eta\right\}>\delta,
\;z\in\mathrm{U}.$$
\end{example}

Taking $\lambda=1$, $\mu=1-\gamma$, and $\eta=\gamma$ in Theorem \ref{thm1} and \ref{thm2}, respectively, we obtain following results:

\begin{corollary}\label{cor7}
Let $f\in\mathcal{A}(p,n)$ such that $f^\prime(z)\left(f^\prime(z)+zf^{\prime\prime}(z)\right)\neq0$ for all $z\in\dot{\mathrm{U}}$, and let $\gamma\in\mathbb{R}$. If
$$\operatorname{Re}\mathcal{J}_1^p(1-\gamma,\gamma)f(z)<p+\frac{nM}{M+p^{\gamma+1}},\;z\in\mathrm{U},$$
where $M\geq p^{\gamma+1}$, then
$$\left|\left(\frac{f^\prime(z)}{z^{p-1}}\right)^{1-\gamma}
\left(\frac{f^\prime(z)+zf^{\prime\prime}(z)}{z^{p-1}}\right)^\gamma-p^{\gamma+1}\right|<M,
\;z\in\mathrm{U}.$$
(All the powers are the principal ones.)
\end{corollary}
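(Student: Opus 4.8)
The plan is to obtain Corollary \ref{cor7} as a direct specialization of Theorem \ref{thm1}, exactly in the manner already used to derive Corollaries \ref{cor3} and \ref{cor5}. First I would set $\lambda=1$, so that $\mathcal{F}_1(z)=(1-1)f(z)+1\cdot zf'(z)=zf'(z)$, and record the consequences $\mathcal{F}_1'(z)=f'(z)+zf''(z)$ and $1+\lambda(p-1)=p$. With these substitutions the non-vanishing hypothesis $\mathcal{F}_\lambda(z)\mathcal{F}_\lambda'(z)\neq0$ on $\dot{\mathrm{U}}$ becomes $zf'(z)\bigl(f'(z)+zf''(z)\bigr)\neq0$ on $\dot{\mathrm{U}}$, which (since $z\neq0$ there) is equivalent to $f'(z)\bigl(f'(z)+zf''(z)\bigr)\neq0$, matching the stated hypothesis.

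Next I would specialize the two real parameters by taking $\mu=1-\gamma$ and $\eta=\gamma$, so that $\mu+\eta=1$ and $\eta+\mu=1$ as well, whence $p(\mu+\eta)=p$ and the exponent $2\eta+\mu=\eta+(\eta+\mu)=\gamma+1$. Consequently the normalizing constant in Theorem \ref{thm1} becomes
$$p^\eta(1+\lambda(p-1))^{\eta+\mu}=p^\gamma\cdot p^{1}=p^{\gamma+1},$$
and the threshold $M\geq p^\eta(1+\lambda(p-1))^{\eta+\mu}$ reads $M\geq p^{\gamma+1}$. The hypothesis \eqref{assumpt1thm1} of Theorem \ref{thm1} then becomes
$$\operatorname{Re}\mathcal{J}_1^p(1-\gamma,\gamma)f(z)<p+\frac{nM}{M+p^{\gamma+1}},\;z\in\mathrm{U},$$
which is precisely the assumption in Corollary \ref{cor7}.

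Finally I would read off the conclusion \eqref{conclthm1} of Theorem \ref{thm1} under these substitutions: the factors become
$$\left(\frac{\mathcal{F}_1(z)}{z^p}\right)^{\mu}=\left(\frac{zf'(z)}{z^p}\right)^{1-\gamma}
=\left(\frac{f'(z)}{z^{p-1}}\right)^{1-\gamma},\qquad
\left(\frac{\mathcal{F}_1'(z)}{z^{p-1}}\right)^{\eta}
=\left(\frac{f'(z)+zf''(z)}{z^{p-1}}\right)^{\gamma},$$
and the subtracted constant is $p^{\gamma+1}$, so \eqref{conclthm1} yields
$$\left|\left(\frac{f'(z)}{z^{p-1}}\right)^{1-\gamma}
\left(\frac{f'(z)+zf''(z)}{z^{p-1}}\right)^{\gamma}-p^{\gamma+1}\right|<M,\;z\in\mathrm{U},$$
as claimed. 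Since Theorem \ref{thm1} is already available, there is no real obstacle here; the only point requiring a modicum of care is the bookkeeping on the exponents (checking $2\eta+\mu=\gamma+1$ and that $\mathcal{F}_1(z)/z^p=f'(z)/z^{p-1}$ after cancelling one power of $z$) and confirming that the principal-branch conventions are preserved by these substitutions, which they are because each base is of the form $1+O(z^n)$ near the origin.
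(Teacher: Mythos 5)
Your proposal is correct and matches the paper's own derivation exactly: the paper obtains Corollary \ref{cor7} precisely by setting $\lambda=1$, $\mu=1-\gamma$, $\eta=\gamma$ in Theorem \ref{thm1}, and your bookkeeping ($\mathcal{F}_1(z)=zf'(z)$, $1+\lambda(p-1)=p$, $p^\eta(1+\lambda(p-1))^{\eta+\mu}=p^{\gamma+1}$) is accurate.
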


\begin{corollary}\label{cor8}
Let $f\in\mathcal{A}(p,n)$ such that $f^\prime(z)\left(f^\prime(z)+zf^{\prime\prime}(z)\right)\neq0$ for all $z\in\dot{\mathrm{U}}$, and let $\gamma\in\mathbb{R}$. If
$$\operatorname{Re}\mathcal{J}_1^p(1-\gamma,\gamma)f(z)>\varrho(\gamma;\delta),
\;z\in\mathrm{U},$$
where $\delta\in\left[0,p^{\gamma+1}\right)$ and
$$\varrho(\gamma;\delta):=k(1-\gamma,\gamma,1;\delta)=\left\{
\begin{array}{l}
p-\frac{n\delta}{2\left(p^{\gamma+1}-\delta\right)},\quad\text{if}\quad
\delta\in\left[0,\frac{p^{\gamma+1}}{2}\right],\\
p-\frac{n\left(p^{\gamma+1}-\delta\right)}{2\delta},\quad\text{if}\quad
\delta\in\left[\frac{p^{\gamma+1}}{2},p^{\gamma+1}\right),
\end{array}\right.$$
$$\operatorname{Re}\left[\left(\frac{f^\prime(z)}{z^{p-1}}\right)^{1-\gamma}
\left(\frac{f^\prime(z)+zf^{\prime\prime}(z)}{z^{p-1}}\right)^\gamma\right]>\delta,\;z\in\mathrm{U}.$$
(All the powers are the principal ones.)
\end{corollary}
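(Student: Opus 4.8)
The plan is to obtain Corollary \ref{cor8} as a direct specialization of Theorem \ref{thm2}, exactly as the preceding sentence indicates: set $\lambda=1$, $\mu=1-\gamma$, and $\eta=\gamma$. First I would verify that with these choices the quantity $p^\eta(1+\lambda(p-1))^{\eta+\mu}$ appearing throughout Theorem \ref{thm2} collapses to $p^\gamma\cdot p^{1}=p^{\gamma+1}$, since $\eta+\mu=1$ and $1+\lambda(p-1)=p$ when $\lambda=1$. Consequently $\delta\in[0,p^{\gamma+1})$ is precisely the admissible range, and the threshold function $k(1-\gamma,\gamma,1;\delta)$ specializes to the stated $\varrho(\gamma;\delta)$: in the first branch $p(\mu+\eta)=p\cdot1=p$ and $p^\eta(1+\lambda(p-1))^{\eta+\mu}-\delta=p^{\gamma+1}-\delta$, giving $p-\frac{n\delta}{2(p^{\gamma+1}-\delta)}$, and similarly for the second branch. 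The break point $\frac{p^\eta(1+\lambda(p-1))^{\eta+\mu}}{2}=\frac{p^{\gamma+1}}{2}$ matches as well.

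Next I would check the non-vanishing hypothesis. With $\lambda=1$ we have $\mathcal{F}_1(z)=(1-1)f(z)+1\cdot zf'(z)=zf'(z)$ from \eqref{defFlambda}, hence $\mathcal{F}_1'(z)=f'(z)+zf''(z)$. Therefore the requirement $\mathcal{F}_1(z)\mathcal{F}_1'(z)\neq0$ for $z\in\dot{\mathrm{U}}$ becomes $zf'(z)\bigl(f'(z)+zf''(z)\bigr)\neq0$ for $z\in\dot{\mathrm{U}}$, which (since $z\neq0$ there) is equivalent to $f'(z)\bigl(f'(z)+zf''(z)\bigr)\neq0$ for $z\in\dot{\mathrm{U}}$, exactly as stated in Corollary \ref{cor8}. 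Also $\mathcal{F}_1(z)/z^p = f'(z)/z^{p-1}$ and $\mathcal{F}_1'(z)/z^{p-1} = \bigl(f'(z)+zf''(z)\bigr)/z^{p-1}$, so the expression $\bigl(\mathcal{F}_\lambda(z)/z^p\bigr)^\mu\bigl(\mathcal{F}_\lambda'(z)/z^{p-1}\bigr)^\eta$ in the definition of $\mathcal{N}^\lambda_{p,n}(\mu,\eta;\delta)$ becomes $\bigl(f'(z)/z^{p-1}\bigr)^{1-\gamma}\bigl((f'(z)+zf''(z))/z^{p-1}\bigr)^\gamma$, matching the conclusion.

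Finally I would note that the differential operator hypothesis \eqref{assumpt1thm2}, $\operatorname{Re}\mathcal{J}_\lambda^p(\mu,\eta)f(z)>k(\mu,\eta,\lambda;\delta)$, becomes $\operatorname{Re}\mathcal{J}_1^p(1-\gamma,\gamma)f(z)>\varrho(\gamma;\delta)$ after the substitution, which is precisely the hypothesis of Corollary \ref{cor8}. Invoking Theorem \ref{thm2} then yields $f\in\mathcal{N}^1_{p,n}(1-\gamma,\gamma;\delta)$, which by the definition of that class is exactly the displayed conclusion $\operatorname{Re}\bigl[\bigl(f'(z)/z^{p-1}\bigr)^{1-\gamma}\bigl((f'(z)+zf''(z))/z^{p-1}\bigr)^\gamma\bigr]>\delta$ for $z\in\mathrm{U}$. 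There is no real obstacle here; the only point requiring care is the bookkeeping of exponents in $p^\eta(1+\lambda(p-1))^{\eta+\mu}$ under $\lambda=1$, $\mu+\eta=1$, and confirming that all principal-branch powers are preserved under the substitution so that the identification of $\mathcal{F}_1$ with $zf'$ is legitimate on $\dot{\mathrm{U}}$.
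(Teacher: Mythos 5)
Your proposal is correct and follows exactly the route the paper intends: Corollary \ref{cor8} is stated as the immediate specialization of Theorem \ref{thm2} with $\lambda=1$, $\mu=1-\gamma$, $\eta=\gamma$, and your verifications that $p^\eta(1+\lambda(p-1))^{\eta+\mu}=p^{\gamma+1}$, that $\mathcal{F}_1(z)=zf'(z)$ so the non-vanishing and membership conditions translate as claimed, and that $k(1-\gamma,\gamma,1;\delta)=\varrho(\gamma;\delta)$ are precisely the bookkeeping the paper leaves implicit. No further comment is needed.
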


We will give two special cases of the above last corollaries, obtained for $f(z)=z^pe^{az}\in\mathcal{A}(p)$ with $a\in\mathbb{R}^*=\mathbb{R}\setminus\{0\}$. Since $f'(z)=z^{p-1}e^{az}(p+az)$, then $f^\prime(z)\neq0$ for all $z\in\dot{\mathrm{U}}$ if and only if $|a|\leq p$. Also, it could be easily checked that $f^\prime(z)+zf^{\prime\prime}(z)=z^{p-1}e^{az}\left[p^2+a(2p+1)z+a^2z^2\right]\neq0$ for all $z\in\dot{\mathrm{U}}$ whenever $|a|\leq\dfrac{2p+1-\sqrt{4p+1}}{2}\leq p$. Using the fact that $\mathcal{F}_1(z)=zf'(z)=z^{p}e^{az}(p+az)$, after some simple computations, we get respectively:

\begin{example}\label{ex37}
Let $\gamma\in\mathbb{R}$, and let $a\in\mathbb{R}^*$ with $|a|\leq\dfrac{2p+1-\sqrt{4p+1}}{2}$, $p\in\mathbb{N}$, such that
$$\operatorname{Re}\left\{az\left[1+\frac{1-\gamma}{az+p}+
\gamma\frac{2az+2p+1}{a^2z^2+a(2p+1)z+p^2}\right]\right\}<\frac{M}{M+p^{\gamma+1}},\;z\in\mathrm{U},$$
where $M\geq p^{\gamma+1}$. Then,
$$\left|e^{az}\left(p+az\right)^{1-\gamma}\left[a^2z^2+a(2p+1)z+p^2\right]^\gamma-p^{\gamma+1}\right|<M,
\;z\in\mathrm{U}.$$
\end{example}

\begin{example}\label{ex38}
Let $\gamma\in\mathbb{R}$, and let $a\in\mathbb{R}^*$ with $|a|\leq\dfrac{2p+1-\sqrt{4p+1}}{2}$, $p\in\mathbb{N}$, such that
\begin{align*}
&\operatorname{Re}\left\{az\left[1+\frac{1-\gamma}{az+p}+
\gamma\frac{2az+2p+1}{a^2z^2+a(2p+1)z+p^2}\right]\right\}>
\dfrac{\delta}{2\left(\delta-p^{\gamma+1}\right)},\;z\in\mathrm{U},\\
&\hspace{100mm}if\;\;\delta\in\left[0,\frac{p^{\gamma+1}}{2}\right],\\
&\operatorname{Re}\left\{az\left[1+\frac{1-\gamma}{az+p}+
\gamma\frac{2az+2p+1}{a^2z^2+a(2p+1)z+p^2}\right]\right\}>
\dfrac{\delta-p^{\gamma+1}}{2\delta},\;z\in\mathrm{U},\\
&\hspace{95mm}if\;\;\delta\in\left[\frac{p^{\gamma+1}}{2},p^{\gamma+1}\right),
\end{align*}
where $\delta\in\left[0,p^{\gamma+1}\right)$. Then,
$$\operatorname{Re}\left\{e^{az}\left(p+az\right)^{1-\gamma}
\left[a^2z^2+a(2p+1)z+p^2\right]^\gamma\right\}>\delta,\;z\in\mathrm{U}.$$
\end{example}

In the next result we will find the relation between $\mathcal{M}^{\lambda}_{p,n}(\gamma;\delta)$ and $\mathcal{N}^{\lambda}_{p,n}({1-\gamma,\gamma;\rho})$. For this purpose, putting $\mu=1-\gamma$, $\eta=\gamma$ in Theorem \ref{thm2}, we have:

\begin{corollary} \label{cor9}
Let $f\in\mathcal{A}(p,n)$ such that $\mathcal{F}_\lambda(z)\mathcal{F}_\lambda^\prime(z)\neq0$ for all $z\in\dot{\mathrm{U}}$, where $\mathcal{F}_\lambda$ is given by \eqref{defFlambda}, and let $\gamma\in\mathbb{R}$. If
$$f\in\mathcal{M}^{\lambda}_{p,n}(\gamma;\rho(\gamma,\lambda;\delta)),$$
where
$$\rho(\gamma,\lambda;\delta):=k(1-\gamma,\gamma,\lambda;\delta)=\left\{
\begin{array}{l}
p-\frac{n\delta}{2\left[p^\gamma(1+\lambda(p-1))-\delta\right]},\quad\text{if}\quad
\delta\in\left[0,\frac{p^\gamma(1+\lambda(p-1))}{2}\right],\\[2mm]
p-\frac{n\left[p^\gamma(1+\lambda(p-1))-\delta\right]}{2\delta},\quad\text{if}\\
\hspace{18mm}
\delta\in\left[\frac{p^\gamma(1+\lambda(p-1))}{2},p^\gamma(1+\lambda(p-1))\right),
\end{array}\right.$$
and $\delta\in\left[0,p^\gamma(1+\lambda(p-1))\right)$, then $f\in\mathcal{N}^{\lambda}_{p,n}(1-\gamma,\gamma;\delta)$.
\end{corollary}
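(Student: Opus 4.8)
The plan is to derive this corollary directly from Theorem~\ref{thm2} by specializing the parameters to $\mu=1-\gamma$ and $\eta=\gamma$; essentially all that is needed is to recognize that both the hypothesis and the conclusion of that specialization are exactly the class memberships in $\mathcal{M}^{\lambda}_{p,n}$ and $\mathcal{N}^{\lambda}_{p,n}$ appearing here.

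First I would record the elementary identity that, straight from the definition of the operator $\mathcal{J}^{p}_{\lambda}(\mu,\eta)$,
$$\mathcal{J}^{p}_{\lambda}(1-\gamma,\gamma)f(z)=(1-\gamma)\frac{z\mathcal{F}_{\lambda}^{\prime}(z)}{\mathcal{F}_{\lambda}(z)}+\gamma\left(1+\frac{z\mathcal{F}_{\lambda}^{\prime\prime}(z)}{\mathcal{F}_{\lambda}^{\prime}(z)}\right),$$
so that, comparing with the definition of $\mathcal{M}^{\lambda}_{p,n}(\gamma;\beta)$, a function $f\in\mathcal{A}(p,n)$ with $\mathcal{F}_{\lambda}(z)\mathcal{F}_{\lambda}^{\prime}(z)\neq0$ on $\dot{\mathrm{U}}$ belongs to $\mathcal{M}^{\lambda}_{p,n}(\gamma;\beta)$ if and only if $\operatorname{Re}\mathcal{J}^{p}_{\lambda}(1-\gamma,\gamma)f(z)>\beta$ for all $z\in\mathrm{U}$.

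Next I would carry out the substitution $\mu=1-\gamma$, $\eta=\gamma$ in Theorem~\ref{thm2}. Because then $\mu+\eta=1$, the recurring quantity $p^{\eta}(1+\lambda(p-1))^{\eta+\mu}$ collapses to $p^{\gamma}(1+\lambda(p-1))$; consequently the admissible interval $\delta\in\left[0,p^{\eta}(1+\lambda(p-1))^{\eta+\mu}\right)$ becomes $\delta\in\left[0,p^{\gamma}(1+\lambda(p-1))\right)$, and the threshold $k(\mu,\eta,\lambda;\delta)$ becomes precisely $k(1-\gamma,\gamma,\lambda;\delta)=\rho(\gamma,\lambda;\delta)$ as displayed in the statement. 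With these identifications, hypothesis \eqref{assumpt1thm2} of Theorem~\ref{thm2} reads $\operatorname{Re}\mathcal{J}^{p}_{\lambda}(1-\gamma,\gamma)f(z)>\rho(\gamma,\lambda;\delta)$ on $\mathrm{U}$, which by the first step is exactly the assumption $f\in\mathcal{M}^{\lambda}_{p,n}(\gamma;\rho(\gamma,\lambda;\delta))$ of the corollary.

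Applying Theorem~\ref{thm2} now yields $f\in\mathcal{N}^{\lambda}_{p,n}(1-\gamma,\gamma;\delta)$, which is the desired conclusion. I do not foresee any real obstacle: the argument is pure bookkeeping, the only genuinely non-automatic points being the exponent simplification $(1+\lambda(p-1))^{\eta+\mu}=1+\lambda(p-1)$, valid because $\mu+\eta=1$, and the observation that the non-vanishing condition on $\mathcal{F}_{\lambda}\mathcal{F}_{\lambda}^{\prime}$ — needed both to invoke Theorem~\ref{thm2} and to legitimately assert membership in $\mathcal{N}^{\lambda}_{p,n}$ — is granted in the hypotheses.
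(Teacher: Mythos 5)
Your proposal is correct and follows exactly the route the paper intends: the corollary is obtained by setting $\mu=1-\gamma$, $\eta=\gamma$ in Theorem~\ref{thm2}, using $\mu+\eta=1$ to simplify $p^{\eta}(1+\lambda(p-1))^{\eta+\mu}$ to $p^{\gamma}(1+\lambda(p-1))$, and identifying the resulting hypothesis and conclusion with membership in $\mathcal{M}^{\lambda}_{p,n}(\gamma;\rho(\gamma,\lambda;\delta))$ and $\mathcal{N}^{\lambda}_{p,n}(1-\gamma,\gamma;\delta)$ respectively. Your explicit verification that $\operatorname{Re}\mathcal{J}^{p}_{\lambda}(1-\gamma,\gamma)f(z)>\beta$ is equivalent to $f\in\mathcal{M}^{\lambda}_{p,n}(\gamma;\beta)$ is a welcome piece of bookkeeping that the paper leaves implicit.
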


For $\lambda=1/2$ we will give a special case of this last corollary, considering $f(z)=z^p+az^{p+n}$ with $a\in\mathbb{C}^*$. Since
$$\mathcal{F}_{1/2}(z)=\dfrac{1}{2}\left[f(z)+zf'(z)\right]=\frac{z^p}{2}\left[p+1+a(p+n+1)z^n\right],$$
it follows that $\mathcal{F}_{1/2}(z)\mathcal{F}_{1/2}^\prime(z)\neq0$ for all $z\in\dot{\mathrm{U}}$ whenever $|a|\leq\dfrac{p(p+1)}{(p+n)(p+n+1)}$. It is easy to check that $f\in\mathcal{M}^{1/2}_{p,n}(\gamma;\rho(\gamma,1/2;\delta))$ if and only if
$$\operatorname{Re}\left[p+\varphi(z)+\gamma\frac{z\varphi'(z)}{p+\varphi(z)}\right]>
\rho(\gamma,1/2;\delta),\;z\in\mathrm{U},$$
where $\varphi(z)=\dfrac{an(p+n+1)z^n}{p+1+a(p+n+1)z^n}$ and $\rho(\gamma,1/2;\delta)$ is given by Corollary \ref{cor9}. Because $\varphi(0)=0$, the above inequality holds in a neighborhood of the annulus, hence it holds for all $z\in\mathrm{U}$ if $|a|$ is small enough. Thus, we get the following example:

\begin{example}\label{ex39}
Let $\gamma\in\mathbb{R}$, and let $a\in\mathbb{C}^*$ with $|a|\leq\dfrac{p(p+1)}{(p+n)(p+n+1)}$, $n,p\in\mathbb{N}$, and $|a|$ small enough such that
$$f(z)=z^p+az^{p+n}\in\mathcal{M}^{1/2}_{p,n}(\gamma;\rho(\gamma,1/2;\delta)),$$
where $\rho(\gamma,1/2;\delta)$ is given by Corollary \ref{cor9}. Then,
$$f(z)=z^p+az^{p+n}\in\mathcal{N}^{1/2}_{p,n}(1-\gamma,\gamma;\delta),$$
i.e.
$$\operatorname{Re}\left\{\left[p+1+a(p+n+1)z^n\right]^{1-\gamma}
\left[p(p+1)+a(p+n)(p+n+1)z^n\right]^\gamma\right\}>2\delta$$
for all $z\in\mathrm{U}$.
\end{example}

Taking $\gamma=0$ and $n=1$ in above corollary, we get the next special case:

\begin{corollary} \label{cor10}
Let $f\in\mathcal{A}(p)$ such that $\mathcal{F}_\lambda(z)\mathcal{F}_\lambda^\prime(z)\neq0$ for all $z\in\dot{\mathrm{U}}$, where $\mathcal{F}_\lambda$ is given by \eqref{defFlambda}. If
$$f\in\mathcal{T}_{\lambda}(p;\rho_1(\lambda;\delta)),$$
where
$$\rho_1(\lambda;\delta):=k(1,0,\lambda;\delta)=\left\{
\begin{array}{l}
p-\frac{\delta}{2\left[1+\lambda(p-1)-\delta\right]},\quad\text{if}\quad
\delta\in\left[0,\frac{1+\lambda(p-1)}{2}\right],\\[2mm]
p-\frac{\left[1+\lambda(p-1)-\delta\right]}{2\delta},\quad\text{if}\\
\hspace{21mm}
\delta\in\left[\frac{1+\lambda(p-1)}{2},1+\lambda(p-1)\right),
\end{array}\right.$$
and $\delta\in\left[0,1+\lambda(p-1)\right)$, then $f\in\mathcal{N}^{\lambda}_{p,1}(1,0;\delta)$.
\end{corollary}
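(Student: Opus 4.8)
The plan is to derive the statement as a direct specialization of Corollary \ref{cor9} (equivalently, of Theorem \ref{thm2} with $\mu=1$, $\eta=0$, $n=1$); hence the only work is to verify that every ingredient of Corollary \ref{cor9} collapses to the corresponding ingredient here once $\gamma=0$ and $n=1$.

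First I would record the parameter identification. In Corollary \ref{cor9} we take $\mu=1-\gamma$ and $\eta=\gamma$; with $\gamma=0$ this gives $\mu=1$, $\eta=0$, so that
$$p^\eta\bigl(1+\lambda(p-1)\bigr)^{\eta+\mu}=p^0\bigl(1+\lambda(p-1)\bigr)^1=1+\lambda(p-1),\qquad p(\mu+\eta)=p.$$
Substituting these values, together with $n=1$, into the piecewise definition of $k(1-\gamma,\gamma,\lambda;\delta)$ from Theorem \ref{thm2} reproduces verbatim the function $\rho_1(\lambda;\delta)$ displayed in the statement, and the admissible range $\delta\in\bigl[0,p^\eta(1+\lambda(p-1))^{\eta+\mu}\bigr)$ becomes $\delta\in\bigl[0,1+\lambda(p-1)\bigr)$.

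Next I would unwind the two class memberships. For the hypothesis, observe that $\mathcal{J}_\lambda^p(1,0)f(z)=\dfrac{z\mathcal{F}_\lambda^\prime(z)}{\mathcal{F}_\lambda(z)}$; using the identity $zf^\prime(z)+\lambda z^2f^{\prime\prime}(z)=z\bigl[(1-\lambda)f(z)+\lambda zf^\prime(z)\bigr]^\prime=z\mathcal{F}_\lambda^\prime(z)$ together with $(1-\lambda)f(z)+\lambda zf^\prime(z)=\mathcal{F}_\lambda(z)$, the inequality $\operatorname{Re}\mathcal{J}_\lambda^p(1,0)f(z)>\rho_1(\lambda;\delta)$ is precisely the defining condition of $\mathcal{T}_\lambda(p;\rho_1(\lambda;\delta))$, i.e.\ $f\in\mathcal{M}^\lambda_{p,1}(0;\rho_1(\lambda;\delta))$. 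For the conclusion, with $\mu=1$ and $\eta=0$ the expression in the definition of $\mathcal{N}^\lambda_{p,1}(1,0;\delta)$ reduces to $\bigl(\mathcal{F}_\lambda(z)/z^p\bigr)^1\bigl(\mathcal{F}_\lambda^\prime(z)/z^{p-1}\bigr)^0=\mathcal{F}_\lambda(z)/z^p$, the principal powers being well defined because $\mathcal{F}_\lambda(z)\mathcal{F}_\lambda^\prime(z)\neq0$ on $\dot{\mathrm{U}}$ by hypothesis; thus the target condition is $\operatorname{Re}\bigl(\mathcal{F}_\lambda(z)/z^p\bigr)>\delta$ for all $z\in\mathrm{U}$.

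With these identifications in place, the implication $f\in\mathcal{T}_\lambda(p;\rho_1(\lambda;\delta))\Rightarrow f\in\mathcal{N}^\lambda_{p,1}(1,0;\delta)$ is exactly Corollary \ref{cor9} read with $\gamma=0$ and $n=1$. There is no essential obstacle: the result is obtained purely by substitution. The only points needing a moment's attention are the algebraic identity $zf^\prime+\lambda z^2f^{\prime\prime}=z\mathcal{F}_\lambda^\prime$, which converts $\mathcal{J}_\lambda^p(1,0)$ into the Irmak--Raina quotient defining $\mathcal{T}_\lambda(p;\cdot)$, and the routine check that the piecewise formula for $k$ specializes correctly to $\rho_1$.
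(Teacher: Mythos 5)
Your proposal is correct and coincides with the paper's own (implicit) proof: the paper obtains Corollary \ref{cor10} precisely by setting $\gamma=0$ and $n=1$ in Corollary \ref{cor9}, which is exactly the substitution you carry out, including the identification $\mathcal{J}_\lambda^p(1,0)f=z\mathcal{F}_\lambda^\prime/\mathcal{F}_\lambda$ with the Irmak--Raina quotient defining $\mathcal{T}_\lambda(p;\cdot)$. Your explicit verification of the specializations (and your reading of the class as $\mathcal{M}^\lambda_{p,1}$, correcting the paper's index-order typo $\mathcal{M}^\lambda_{1,p}$) is more careful than the paper's one-line derivation.
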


If $\lambda=1/2$ we will give a special case of the above result for $f(z)=z^p+az^{p+1}\in\mathcal{A}(p)$ with $a\in\mathbb{C}^*$. Because
$$\mathcal{F}_{1/2}(z)=\dfrac{1}{2}\left[f(z)+zf'(z)\right]=\frac{z^p}{2}\left[p+1+a(p+2)z\right],$$
it follows that $\mathcal{F}_{1/2}(z)\mathcal{F}_{1/2}^\prime(z)\neq0$ for all $z\in\dot{\mathrm{U}}$, whenever $|a|\leq\dfrac{p}{p+2}$. It is easy to check that $f\in\mathcal{T}_{1/2}(p;\rho_1(1/2;\delta))$ if and only if
\begin{equation}\label{eqCor10pr1}
\operatorname{Re}\left[p+\varphi(z)\right]>\rho_1(1/2;\delta),
\;z\in\mathrm{U},
\end{equation}
where $\varphi(z)=\dfrac{a(p+2)z}{p+1+a(p+2)z}$ and $\rho_1(1/2;\delta)$ is given by Corollary \ref{cor10}. According to \eqref{ineqReH} we deduce that
$$\operatorname{Re}\left[p+\varphi(z)\right]>p-\frac{|a|(p+2)}{p+1-|a|(p+2)},\;z\in\mathrm{U},$$
and combining this inequality with \eqref{eqCor10pr1} we obtain the following example:

\begin{example}\label{ex310}
Let $a\in\mathbb{C}^*$ with $|a|\leq\dfrac{p}{p+2}$, $p\in\mathbb{N}$, such that
\begin{align*}
&|a|\leq\frac{(p+1)\delta}{(p+2)(p+1-\delta)},\;if\;\delta\in\left[0,\frac{p+1}{4}\right],\\
&|a|\leq\frac{(p+1)(p+1-2\delta)}{(p+2)(p+1+2\delta)},\;if\;
\delta\in\left[\frac{p+1}{4},\frac{p+1}{2}\right),
\end{align*}
where $\delta\in\left[0,\dfrac{p+1}{2}\right)$. Then,
$$f(z)=z^p+az^{p+1}\in\mathcal{N}^{1/2}_{p,1}(1,0;\delta).$$
\end{example}

For the special cases $\eta=1$ and $\mu=-1$, Theorem \ref{thm1} and \ref{thm2} reduces, respectively, to the next results:

\begin{corollary} \label{cor11}
Let $f\in\mathcal{A}(p,n)$ such that $\mathcal{F}_\lambda(z)\mathcal{F}_\lambda^\prime(z)\neq0$ for all $z\in\dot{\mathrm{U}}$, where $\mathcal{F}_\lambda$ is given by \eqref{defFlambda}. If
$$\operatorname{Re}\mathcal{J}_\lambda^p(-1,1)f(z)<\frac{nM}{M+p},\;z\in\mathrm{U},$$
where $M\geq p$, then
$$\left|\frac{z\mathcal{F}_\lambda^\prime(z)}{\mathcal{F}_\lambda(z)}-p\right|<M,\;z\in\mathrm{U}.$$
\end{corollary}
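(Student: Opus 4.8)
The plan is to obtain Corollary \ref{cor11} as a direct specialization of Theorem \ref{thm1}. First I would set $\mu=-1$ and $\eta=1$ in the statement of Theorem \ref{thm1} and check that the exponents collapse correctly: the quantity $p^\eta(1+\lambda(p-1))^{\eta+\mu}$ becomes $p^1(1+\lambda(p-1))^{1+(-1)}=p\cdot(1+\lambda(p-1))^0=p$, and $p(\mu+\eta)=p(-1+1)=0$. Hence the right-hand side of the hypothesis \eqref{assumpt1thm1} reduces to $0+\dfrac{nM}{M+p}=\dfrac{nM}{M+p}$, with the side condition $M\geq p^\eta(1+\lambda(p-1))^{\eta+\mu}=p$, matching exactly the hypotheses stated in Corollary \ref{cor11}.

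Next I would identify the two relevant expressions. The differential operator $\mathcal{J}_\lambda^p(\mu,\eta)f(z)=\mu\dfrac{z\mathcal{F}_\lambda'(z)}{\mathcal{F}_\lambda(z)}+\eta\left(1+\dfrac{z\mathcal{F}_\lambda''(z)}{\mathcal{F}_\lambda'(z)}\right)$ becomes $\mathcal{J}_\lambda^p(-1,1)f(z)$, and the left-hand side of the hypothesis of Corollary \ref{cor11} is precisely $\operatorname{Re}\mathcal{J}_\lambda^p(-1,1)f(z)<\dfrac{nM}{M+p}$, which is \eqref{assumpt1thm1} specialized. For the conclusion, the combination $\left(\dfrac{\mathcal{F}_\lambda(z)}{z^p}\right)^\mu\left(\dfrac{\mathcal{F}_\lambda'(z)}{z^{p-1}}\right)^\eta$ with $\mu=-1$, $\eta=1$ equals $\left(\dfrac{z^p}{\mathcal{F}_\lambda(z)}\right)\left(\dfrac{\mathcal{F}_\lambda'(z)}{z^{p-1}}\right)=\dfrac{z\mathcal{F}_\lambda'(z)}{\mathcal{F}_\lambda(z)}$, so the conclusion \eqref{conclthm1}, namely $\left|\left(\dfrac{\mathcal{F}_\lambda(z)}{z^p}\right)^\mu\left(\dfrac{\mathcal{F}_\lambda'(z)}{z^{p-1}}\right)^\eta-p^\eta(1+\lambda(p-1))^{\eta+\mu}\right|<M$, becomes exactly $\left|\dfrac{z\mathcal{F}_\lambda'(z)}{\mathcal{F}_\lambda(z)}-p\right|<M$, which is the asserted conclusion.

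There is essentially no obstacle here: the argument is purely a substitution of parameter values together with the algebraic simplification of the principal powers. The one point that warrants a line of care is that with $\mu=-1$ the factor $\left(\dfrac{\mathcal{F}_\lambda(z)}{z^p}\right)^{-1}$ is legitimate precisely because the hypothesis $\mathcal{F}_\lambda(z)\mathcal{F}_\lambda'(z)\neq0$ on $\dot{\mathrm U}$ (together with $\mathcal{F}_\lambda(z)/z^p\to1$ as $z\to0$, which follows from $f\in\mathcal A(p,n)$ so that $\mathcal{F}_\lambda(z)=z^p(1+\lambda(p-1))+\dots$ with leading coefficient $1+\lambda(p-1)\neq0$) guarantees the relevant quantity is nonvanishing and analytic near the origin, so the principal branch is well defined; this is already subsumed in the hypotheses of Theorem \ref{thm1}. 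Thus the proof is just: \emph{apply Theorem \ref{thm1} with $\mu=-1$ and $\eta=1$, and simplify.}

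\begin{proof}
Taking $\mu=-1$ and $\eta=1$ in Theorem \ref{thm1}, we have $p(\mu+\eta)=0$ and $p^\eta(1+\lambda(p-1))^{\eta+\mu}=p(1+\lambda(p-1))^0=p$, so the hypothesis \eqref{assumpt1thm1} becomes $\operatorname{Re}\mathcal{J}_\lambda^p(-1,1)f(z)<\dfrac{nM}{M+p}$ for $z\in\mathrm{U}$, with $M\geq p$. Moreover,
$$\left(\frac{\mathcal{F}_\lambda(z)}{z^p}\right)^{\mu}\left(\frac{\mathcal{F}_\lambda^\prime(z)}{z^{p-1}}\right)^{\eta}
=\left(\frac{\mathcal{F}_\lambda(z)}{z^p}\right)^{-1}\cdot\frac{\mathcal{F}_\lambda^\prime(z)}{z^{p-1}}
=\frac{z\mathcal{F}_\lambda^\prime(z)}{\mathcal{F}_\lambda(z)},$$
where this quotient is well defined and analytic in $\mathrm{U}$ because $\mathcal{F}_\lambda(z)\mathcal{F}_\lambda^\prime(z)\neq0$ for all $z\in\dot{\mathrm{U}}$ and $\mathcal{F}_\lambda(z)/z^p\to1+\lambda(p-1)\neq0$ as $z\to0$. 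Hence the conclusion \eqref{conclthm1} of Theorem \ref{thm1} reduces to
$$\left|\frac{z\mathcal{F}_\lambda^\prime(z)}{\mathcal{F}_\lambda(z)}-p\right|<M,\;z\in\mathrm{U},$$
which is the desired assertion.
\end{proof}
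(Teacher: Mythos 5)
Your proposal is correct and coincides with the paper's own derivation: the paper obtains Corollary \ref{cor11} precisely by setting $\eta=1$ and $\mu=-1$ in Theorem \ref{thm1}, so that $p(\mu+\eta)=0$, $p^\eta(1+\lambda(p-1))^{\eta+\mu}=p$, and the product of powers collapses to $z\mathcal{F}_\lambda^\prime(z)/\mathcal{F}_\lambda(z)$. Your extra remark on the well-definedness of the principal power with $\mu=-1$ is a harmless (and reasonable) addition not spelled out in the paper.
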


\begin{corollary} \label{cor12}
Let $f\in\mathcal{A}(p,n)$ such that $\mathcal{F}_\lambda(z)\mathcal{F}_\lambda^\prime(z)\neq0$ for all $z\in\dot{\mathrm{U}}$, where $\mathcal{F}_\lambda$ is given by \eqref{defFlambda}. If
$$\operatorname{Re}\mathcal{J}_\lambda^p(-1,1)f(z)>\varsigma(\delta),\;z\in\mathrm{U},$$
where
$$\varsigma(\delta):=k(-1,1,\lambda;\delta)=\left\{
\begin{array}{l}
-\frac{n\delta}{2\left(p-\delta\right)},\quad\text{if}\quad
\delta\in\left[0,\frac{p}{2}\right],\\
-\frac{n\left(p-\delta\right)}{2\delta},\quad\text{if}\quad
\delta\in\left[\frac{p}{2},p\right),
\end{array}\right.$$
and $\delta\in\left[0,p\right)$, then
$$\operatorname{Re}\frac{z\mathcal{F}_\lambda^\prime(z)}{\mathcal{F}_\lambda(z)}>\delta,
\;z\in\mathrm{U}.$$
\end{corollary}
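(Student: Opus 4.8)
The plan is to obtain Corollary \ref{cor12} as the special case $\mu=-1$, $\eta=1$ of Theorem \ref{thm2}, so that essentially all that is needed is to verify that the hypotheses and the conclusion of Theorem \ref{thm2} specialize exactly to the statement given here.

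First I would carry out the bookkeeping of the parameters. With $\mu+\eta=0$ one has $p(\mu+\eta)=0$, and the normalizing constant becomes $p^{\eta}\left(1+\lambda(p-1)\right)^{\eta+\mu}=p\cdot\left(1+\lambda(p-1)\right)^{0}=p$. Hence the admissible range $\delta\in\left[0,p^{\eta}(1+\lambda(p-1))^{\eta+\mu}\right)$ is precisely $\delta\in[0,p)$, and the two branches of $k(\mu,\eta,\lambda;\delta)$ collapse to $-\dfrac{n\delta}{2(p-\delta)}$ for $\delta\in[0,p/2]$ and to $-\dfrac{n(p-\delta)}{2\delta}$ for $\delta\in[p/2,p)$; that is, $k(-1,1,\lambda;\delta)=\varsigma(\delta)$, matching the displayed definition.

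Next I would identify the two analytic expressions involved. By the definition of the operator, $\mathcal{J}_\lambda^p(-1,1)f(z)=-\dfrac{z\mathcal{F}_\lambda'(z)}{\mathcal{F}_\lambda(z)}+\left(1+\dfrac{z\mathcal{F}_\lambda''(z)}{\mathcal{F}_\lambda'(z)}\right)$, so the hypothesis $\operatorname{Re}\mathcal{J}_\lambda^p(-1,1)f(z)>\varsigma(\delta)$ is exactly assumption \eqref{assumpt1thm2} of Theorem \ref{thm2} (with $k(-1,1,\lambda;\delta)$ in place of $k(\mu,\eta,\lambda;\delta)$). For the conclusion, since the $(-1)$-st principal power is just the reciprocal, one has $\left(\dfrac{\mathcal{F}_\lambda(z)}{z^p}\right)^{-1}\left(\dfrac{\mathcal{F}_\lambda'(z)}{z^{p-1}}\right)=\dfrac{z\mathcal{F}_\lambda'(z)}{\mathcal{F}_\lambda(z)}$, a quotient well defined on $\dot{\mathrm{U}}$ because the standing hypothesis guarantees $\mathcal{F}_\lambda(z)\mathcal{F}_\lambda'(z)\neq0$ there; thus the assertion $f\in\mathcal{N}^{\lambda}_{p,n}(-1,1;\delta)$ is nothing but $\operatorname{Re}\dfrac{z\mathcal{F}_\lambda'(z)}{\mathcal{F}_\lambda(z)}>\delta$ for all $z\in\mathrm{U}$. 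Applying Theorem \ref{thm2} then finishes the proof.

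I do not expect a genuine obstacle: the argument is a direct specialization. The only points that deserve a moment's care are that the fractional-power conventions degenerate harmlessly when $\mu+\eta=0$ (so no branch-cut ambiguity enters, the product reducing to an honest meromorphic quotient on $\mathrm{U}$ with a removable singularity at the origin, where its value is $p$), and that the non-vanishing condition imposed on $\mathcal{F}_\lambda$ is precisely the one Theorem \ref{thm2} requires — both of which are immediate.
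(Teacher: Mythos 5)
Your proposal is correct and coincides with the paper's treatment: Corollary \ref{cor12} is obtained there exactly as the specialization $\mu=-1$, $\eta=1$ of Theorem \ref{thm2}, with $p^{\eta}\left(1+\lambda(p-1)\right)^{\eta+\mu}=p$ and $p(\mu+\eta)=0$, so that $k(-1,1,\lambda;\delta)=\varsigma(\delta)$ and the class condition reduces to $\operatorname{Re}\dfrac{z\mathcal{F}_\lambda'(z)}{\mathcal{F}_\lambda(z)}>\delta$. Your additional remarks on the degeneration of the principal powers and the non-vanishing hypothesis are accurate and complete the verification.
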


Reversely, taking $\eta=-1$ and $\mu=1$ in Theorem \ref{thm1} and \ref{thm2}, respectively, we get following results:

\begin{corollary} \label{cor13}
Let $f\in\mathcal{A}(p,n)$ such that $\mathcal{F}_\lambda(z)\mathcal{F}_\lambda^\prime(z)\neq0$ for all $z\in\dot{\mathrm{U}}$, where $\mathcal{F}_\lambda$ is given by \eqref{defFlambda}. If
$$\operatorname{Re}\mathcal{J}_\lambda^p(1,-1)f(z)<\frac{nMp}{Mp+1},\;z\in\mathrm{U},$$
where $M\geq\dfrac{1}{p}$, then
$$\left|\frac{\mathcal{F}_\lambda(z)}{z\mathcal{F}_\lambda^\prime(z)}-\frac{1}{p}\right|<M,
\;z\in\mathrm{U}.$$
\end{corollary}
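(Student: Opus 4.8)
The plan is to obtain Corollary \ref{cor13} as the direct specialization of Theorem \ref{thm1} with the parameter choice $\mu=1$, $\eta=-1$. First I would substitute these values into the hypothesis \eqref{assumpt1thm1} and conclusion \eqref{conclthm1} of Theorem \ref{thm1}, and simplify the constant $p^\eta(1+\lambda(p-1))^{\eta+\mu}$. Since $\eta+\mu=0$, the factor $(1+\lambda(p-1))^{\eta+\mu}$ collapses to $1$, and $p^\eta=p^{-1}=1/p$; hence the normalizing constant is exactly $1/p$, which matches the quantity appearing in the conclusion of the corollary. Likewise $p(\mu+\eta)=0$, so the right-hand side of the hypothesis becomes $nM/(M+1/p)$, and multiplying numerator and denominator by $p$ gives $nMp/(Mp+1)$, precisely the bound stated in Corollary \ref{cor13}. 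The condition $M\ge p^\eta(1+\lambda(p-1))^{\eta+\mu}$ becomes $M\ge 1/p$, as required.

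Next I would check that the differential operator in \eqref{conclthm1} specializes correctly. With $\mu=1$, $\eta=-1$ the expression $\bigl(\mathcal{F}_\lambda(z)/z^p\bigr)^\mu\bigl(\mathcal{F}_\lambda'(z)/z^{p-1}\bigr)^\eta$ equals $\dfrac{\mathcal{F}_\lambda(z)/z^p}{\mathcal{F}_\lambda'(z)/z^{p-1}}=\dfrac{\mathcal{F}_\lambda(z)}{z\mathcal{F}_\lambda'(z)}$, using the nonvanishing hypothesis $\mathcal{F}_\lambda(z)\mathcal{F}_\lambda'(z)\ne 0$ on $\dot{\mathrm{U}}$ to ensure the quotient and the principal powers are well defined. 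So \eqref{conclthm1} reads $\bigl|\mathcal{F}_\lambda(z)/(z\mathcal{F}_\lambda'(z))-1/p\bigr|<M$, which is exactly the asserted conclusion. Finally, the left-hand side of the hypothesis, $\operatorname{Re}\mathcal{J}_\lambda^p(1,-1)f(z)$, is already in the form stated in the corollary, since $\mathcal{J}_\lambda^p(\mu,\eta)$ is defined for arbitrary real $\mu,\eta$; no rewriting is needed there.

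The only mild subtlety — and the nearest thing to an obstacle — is verifying that the map $\mathcal{J}_\lambda^p(1,-1)f$ still lands in $\mathcal{H}[\mu+\eta,p+n]=\mathcal{H}[0,p+n]$ so that Theorem \ref{thm1} applies, and that the auxiliary function $h$ in that proof (namely $h=\mathcal{F}_\lambda(z)/(z\mathcal{F}_\lambda'(z))-1/p$) indeed belongs to $\mathcal{H}[0,n]$. Both are immediate from $f\in\mathcal{A}(p,n)$ together with $\mathcal{F}_\lambda(z)\mathcal{F}_\lambda'(z)\ne 0$ on $\dot{\mathrm{U}}$: writing $\mathcal{F}_\lambda(z)=(1+\lambda(p-1))z^p+\dots$ and $\mathcal{F}_\lambda'(z)=p(1+\lambda(p-1))z^{p-1}+\dots$, the quotient is analytic near $0$ with value $1/p$ there, so $h(0)=0$ and the next nonzero coefficient occurs at order at least $n$. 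Hence the proof is simply the sentence: apply Theorem \ref{thm1} with $\mu=1$ and $\eta=-1$, and simplify. I would present it as such, perhaps displaying the one-line computation $(\mathcal{F}_\lambda/z^p)(\mathcal{F}_\lambda'/z^{p-1})^{-1}=\mathcal{F}_\lambda/(z\mathcal{F}_\lambda')$ and the arithmetic $nM/(M+1/p)=nMp/(Mp+1)$ for the reader's convenience.
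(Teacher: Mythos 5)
Your proposal is correct and is exactly the paper's route: the authors obtain Corollary \ref{cor13} by setting $\mu=1$, $\eta=-1$ in Theorem \ref{thm1}, so that $p^\eta(1+\lambda(p-1))^{\eta+\mu}=1/p$, $p(\mu+\eta)=0$, and $nM/(M+1/p)=nMp/(Mp+1)$. Your additional checks (the collapse of the power expression to $\mathcal{F}_\lambda/(z\mathcal{F}_\lambda')$ and the membership $h\in\mathcal{H}[0,n]$) are accurate and only make explicit what the paper leaves implicit.
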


\begin{corollary} \label{cor14}
Let $f\in\mathcal{A}(p,n)$ such that $\mathcal{F}_\lambda(z)\mathcal{F}_\lambda^\prime(z)\neq0$ for all $z\in\dot{\mathrm{U}}$, where $\mathcal{F}_\lambda$ is given by \eqref{defFlambda}. If
$$\operatorname{Re}\mathcal{J}_\lambda^p(1,-1)f(z)>\varsigma_1(1,-1,\lambda;\delta),\;z\in\mathrm{U},$$
where
$$\varsigma_1(1,-1,\lambda;\delta):=k(1,-1,\lambda;\delta)=\left\{
\begin{array}{l}
-\frac{n\delta p}{2\left(1-\delta p\right)},\quad\text{if}\quad
\delta\in\left[0,\frac{1}{2p}\right],\\[2mm]
-\frac{n\left(1-\delta p\right)}{2\delta p},\quad\text{if}\quad
\delta\in\left[\frac{1}{2p},\frac{1}{p}\right),
\end{array}\right.$$
and $\delta\in\left[0,\dfrac{1}{p}\right)$, then
$$\operatorname{Re}\frac{\mathcal{F}_\lambda(z)}{z\mathcal{F}_\lambda^\prime(z)}>\delta,
\;z\in\mathrm{U}.$$
\end{corollary}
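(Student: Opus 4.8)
The plan is to obtain this as a direct specialization of Theorem \ref{thm2} by substituting $\mu=1$ and $\eta=-1$ (together with the given $\lambda$), and then simplifying all the quantities that appear. First I would record that with $\mu=1$, $\eta=-1$ the exponent $\eta+\mu$ vanishes, so $p^\eta(1+\lambda(p-1))^{\eta+\mu}$ collapses: $p^{-1}(1+\lambda(p-1))^0=\frac1p$. Hence the admissibility interval $\delta\in\left[0,p^\eta(1+\lambda(p-1))^{\eta+\mu}\right)$ becomes $\delta\in\left[0,\frac1p\right)$, exactly as stated. Next I would check that the weighted combination in the conclusion of Theorem \ref{thm2} becomes the asserted one: $\left(\frac{\mathcal F_\lambda(z)}{z^p}\right)^{1}\left(\frac{\mathcal F_\lambda'(z)}{z^{p-1}}\right)^{-1}=\frac{z^{p-1}\mathcal F_\lambda(z)}{z^p\,\mathcal F_\lambda'(z)}=\frac{\mathcal F_\lambda(z)}{z\mathcal F_\lambda'(z)}$, so $\operatorname{Re}\left[\,\cdot\,\right]>\delta$ reads $\operatorname{Re}\frac{\mathcal F_\lambda(z)}{z\mathcal F_\lambda'(z)}>\delta$, as claimed. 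The hypothesis $\operatorname{Re}\mathcal J_\lambda^p(\mu,\eta)f(z)>k(\mu,\eta,\lambda;\delta)$ with $\mu=1,\eta=-1$ is literally $\operatorname{Re}\mathcal J_\lambda^p(1,-1)f(z)>k(1,-1,\lambda;\delta)$, which is the stated hypothesis once we name $\varsigma_1(1,-1,\lambda;\delta):=k(1,-1,\lambda;\delta)$.

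The only genuine computation is to verify that $k(1,-1,\lambda;\delta)$, as defined in Theorem \ref{thm2}, reduces to the two-branch formula in the statement. With $p^\eta(1+\lambda(p-1))^{\eta+\mu}=\frac1p$ and $p(\mu+\eta)=p\cdot 0=0$, the first branch $p(\mu+\eta)-\frac{n\delta}{2\left[p^\eta(1+\lambda(p-1))^{\eta+\mu}-\delta\right]}$ becomes $-\frac{n\delta}{2\left(\frac1p-\delta\right)}=-\frac{n\delta p}{2(1-\delta p)}$, valid for $\delta\in\left[0,\frac{1/p}{2}\right]=\left[0,\frac1{2p}\right]$; the second branch $p(\mu+\eta)-\frac{n\left[p^\eta(1+\lambda(p-1))^{\eta+\mu}-\delta\right]}{2\delta}$ becomes $-\frac{n\left(\frac1p-\delta\right)}{2\delta}=-\frac{n(1-\delta p)}{2\delta p}$, valid for $\delta\in\left[\frac1{2p},\frac1p\right)$. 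These are precisely the two cases displayed for $\varsigma_1$. One should also note in passing that the nondegeneracy hypothesis $\mathcal F_\lambda(z)\mathcal F_\lambda'(z)\neq0$ on $\dot{\mathrm U}$ is exactly what Theorem \ref{thm2} requires in order to form $h$ and to guarantee $h\in\mathcal H[1,n]$.

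There is no real obstacle here: the statement is obtained by plugging specific parameter values into an already-proved theorem, and the whole proof is the bookkeeping of the substitution. The one point to be slightly careful about is the sign and the reciprocal: because $\eta=-1$ is negative, the "power" $\left(\frac{\mathcal F_\lambda'(z)}{z^{p-1}}\right)^{-1}$ is a genuine reciprocal of a nonvanishing analytic function, which is legitimate precisely because of the assumption $\mathcal F_\lambda'(z)\neq0$ on $\dot{\mathrm U}$ and the fact that $\mathcal F_\lambda'(z)/z^{p-1}\to p(1+\lambda(p-1))\neq0$ as $z\to0$, so the principal branch is well defined near the origin and $h\in\mathcal H[1,n]$ is justified. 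Thus the proof reduces to: invoke Theorem \ref{thm2} with $\mu=1$, $\eta=-1$; simplify $p^\eta(1+\lambda(p-1))^{\eta+\mu}=\frac1p$ and $p(\mu+\eta)=0$; read off that the threshold $k(1,-1,\lambda;\delta)$ equals the displayed $\varsigma_1(1,-1,\lambda;\delta)$; and conclude $\operatorname{Re}\dfrac{\mathcal F_\lambda(z)}{z\mathcal F_\lambda'(z)}>\delta$ for all $z\in\mathrm U$.
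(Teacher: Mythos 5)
Your proposal is correct and follows exactly the route the paper takes: Corollary \ref{cor14} is stated there as the immediate specialization of Theorem \ref{thm2} to $\mu=1$, $\eta=-1$, and your bookkeeping (namely $p^\eta(1+\lambda(p-1))^{\eta+\mu}=1/p$, $p(\mu+\eta)=0$, the resulting two-branch formula for $k(1,-1,\lambda;\delta)$, and the identification of the product as $\mathcal{F}_\lambda(z)/\bigl(z\mathcal{F}_\lambda'(z)\bigr)$) is accurate. Nothing further is needed.
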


For $\lambda=1/2$ we will give some special cases of the last five results, considering $f(z)=z^p+az^{p+1}\in\mathcal{A}(p)$ with $a\in\mathbb{C}^*$. Because
$$\mathcal{F}_{1/2}(z)=\dfrac{1}{2}\left[f(z)+zf'(z)\right]=\frac{z^p}{2}\left[p+1+a(p+2)z\right],$$
it follows that $\mathcal{F}_{1/2}(z)\mathcal{F}_{1/2}^\prime(z)\neq0$ for all $z\in\dot{\mathrm{U}}$, whenever $|a|\leq\dfrac{p}{p+2}$. A simple computation shows that
\begin{eqnarray}
&&\mathcal{J}_{1/2}^p(-1,1)f(z)=\frac{z\varphi'(z)}{p+\varphi(z)},\label{zvpp}\\
&&\mathcal{J}_{1/2}^p(1,-1)f(z)=-\frac{z\varphi'(z)}{p+\varphi(z)},\label{minzvpp}
\end{eqnarray}
where $\varphi(z)=\dfrac{a(p+2)z}{p+1+a(p+2)z}$. On the other hand, we have
\begin{eqnarray}\label{eqnavp}
&&\frac{z\varphi'(z)}{p+\varphi(z)}=\frac{\frac{a(p+2)z}{p}}{1+\frac{a(p+2)z}{p}}-
\frac{\frac{a(p+2)z}{p+1}}{1+\frac{a(p+2)z}{p+1}}=H(\tau)-H(\sigma),\\
&&\tau=\frac{a(p+2)z}{p},\;\sigma=\frac{a(p+2)z}{p+1},\nonumber
\end{eqnarray}
where $H(\zeta)=\dfrac{\zeta}{1+\zeta}$. Since $|\tau|<\dfrac{|a|(p+2)}{p}$, $|\sigma|<\dfrac{|a|(p+2)}{p+1}$ and $\dfrac{|a|(p+2)}{p+1}<\dfrac{|a|(p+2)}{p}$, from \eqref{eqnavp} combined with \eqref{ineqReH} we deduce the estimates
$$\operatorname{Re}\frac{z\varphi'(z)}{p+\varphi(z)}<
\sup_{|\tau|<\frac{|a|(p+2)}{p}}\operatorname{Re}H(\tau)-
\inf_{|\tau|<\frac{|a|(p+2)}{p}}\operatorname{Re}H(\tau)=\frac{2x}{1-x^2}$$
and
$$\operatorname{Re}\frac{z\varphi'(z)}{p+\varphi(z)}>
\inf_{|\tau|<\frac{|a|(p+2)}{p}}\operatorname{Re}H(\tau)-
\sup_{|\tau|<\frac{|a|(p+2)}{p}}\operatorname{Re}H(\tau)=-\frac{2x}{1-x^2},$$
where $x=\dfrac{|a|(p+2)}{p}$, i.e.
$$-\frac{2x}{1-x^2}<\operatorname{Re}\frac{z\varphi'(z)}{p+\varphi(z)}<\frac{2x}{1-x^2},
\;x=\dfrac{|a|(p+2)}{p}.$$
From the above computations, we mention that this bounds are not necessary the best ones.

Using the relations \eqref{zvpp} and \eqref{minzvpp} and according to the above inequalities, we deduce the following special cases of Corollaries \ref{cor11} -- \ref{cor14} respectively:

\begin{example}\label{ex311}
Let $a\in\mathbb{C}^*$ with
$$|a|\leq\dfrac{p}{p+2}\dfrac{-(M+p)+\sqrt{(M+p)^2+M^2}}{M},$$
where $M\geq p$, $p\in\mathbb{N}$. Then,
$$\left|\dfrac{a(p+2)z}{p+1+a(p+2)z}\right|<M,\;z\in\mathrm{U}.$$
\end{example}

\begin{example}\label{ex312}
Let $a\in\mathbb{C}^*$ with
\begin{align*}
&|a|\leq\frac{p}{p+2}\frac{-2(p-\delta)+\sqrt{4(p-\delta)^2+\delta^2}}{\delta},\;if
\;\delta\in\left[0,\frac{p}{2}\right],\\
&|a|\leq\frac{p}{p+2}\frac{-2\delta+\sqrt{(p-\delta)^2+4\delta^2}}{p-\delta},\;if\;
\delta\in\left[\frac{p}{2},p\right),
\end{align*}
where $\delta\in\left[0,p\right)$, $p\in\mathbb{N}$. Then,
$$\operatorname{Re}\left[p+\dfrac{a(p+2)z}{p+1+a(p+2)z}\right]>\delta,\;z\in\mathrm{U}.$$
\end{example}

\begin{example}\label{ex313}
Let $a\in\mathbb{C}^*$ with
$$|a|\leq\dfrac{p}{p+2}\dfrac{-(Mp+1)+\sqrt{(Mp+1)^2+M^2p^2}}{Mp},$$
where $M\geq\dfrac{1}{p}$, $p\in\mathbb{N}$. Then,
$$\left|\frac{1}{p+\frac{a(p+2)z}{p+1+a(p+2)z}}-\frac{1}{p}\right|<M,\;z\in\mathrm{U}.$$
\end{example}

\begin{example}\label{ex314}
Let $a\in\mathbb{C}^*$ with
\begin{align*}
&|a|\leq\frac{p}{p+2}\frac{-2(1-\delta p)+\sqrt{4(1-\delta p)^2+\delta^2p^2}}{\delta p},\;if
\;\delta\in\left[0,\frac{1}{2p}\right],\\
&|a|\leq\frac{p}{p+2}\frac{-2\delta p+\sqrt{(1-\delta p)^2+4\delta^2 p^2}}{1-\delta p},\;if\;
\delta\in\left[\frac{1}{2p},\frac{1}{p}\right),
\end{align*}
where $\delta\in\left[0,\dfrac{1}{p}\right)$, $p\in\mathbb{N}$. Then,
$$\operatorname{Re}\frac{1}{p+\dfrac{a(p+2)z}{p+1+a(p+2)z}}>\delta,\;z\in\mathrm{U}.$$
\end{example}

\textbf{Acknowledgments.} The first author (Dr. Pranay Goswami) is grateful to Prof. S. P. Goyal, Emeritus Scientist, Department of Mathematics, University of Rajasthan, Jaipur, India, for his guidance and constant encouragement.

The authors are grateful to the reviewers of this article, that gave valuable advices in order to revise and complete the results of the paper.
%************************************************

%************************************************

\end{document}